\title{On ideal filtrations for Newton nondegenerate surface singularities}
\author{Baldur Sigur\dh sson}
\date{\today}
\newtheorem{thm}{Theorem}[section]
\crefname{thm}{theorem}{theorems}
\Crefname{thm}{Theorem}{Theorems}
\newtheorem{lemma}[thm]{Lemma}
\crefname{lemma}{lemma}{lemmas}
\Crefname{lemma}{Lemma}{Lemmas}
\newtheorem{prop}[thm]{Proposition}
\crefname{prop}{proposition}{propositions}
\Crefname{prop}{Proposition}{Propositions}
\crefname{cor}{corollary}{corollaries}
\Crefname{cor}{Corollary}{Corollaries}
\theoremstyle{definition}
\newtheorem{rem}[thm]{Remark}
\crefname{rem}{remark}{remarks}
\Crefname{rem}{Remark}{Remarks}
\crefname{example}{example}{examples}
\Crefname{example}{Example}{Examples}
\newtheorem{definition}[thm]{Definition}
\crefname{definition}{definition}{definitions}
\Crefname{definition}{Definition}{Definitions}
\crefname{warning}{warning}{warning}
\Crefname{warning}{Warning}{Warning}
\crefname{alg}{algorithm}{algorithms}
\Crefname{alg}{Algorithm}{Algorithms}
\newtheorem*{ack}{Acknowledgements}
\newcommand{\C}{\mathbb{C}}
\newcommand{\N}{\mathbb{N}}
\newcommand{\Z}{\mathbb{Z}}
\newcommand{\R}{\mathbb{R}}
\newcommand{\Q}{\mathbb{Q}}
\newcommand{\V}{\mathcal{V}}
\newcommand{\E}{\mathcal{E}}
\newcommand{\Nd}{\mathcal{N}}
\newcommand{\F}{\mathcal{F}}
\newcommand{\G}{\mathcal{G}}
\newcommand{\I}{\mathcal{I}}
\newcommand{\Gh}{{\hat{\mathcal{G}}}}
\newcommand{\SR}{\mathcal{S}_{\R}}
\newcommand{\X}{\tilde X}
\renewcommand{\O}{\mathcal{O}}
\newcommand{\supp}{\mathop{\rm supp}\nolimits}
\newcommand{\Hom}{\mathop{\rm Hom}\nolimits}
\newcommand{\wt}{\mathop{\rm wt}\nolimits}
\newcommand{\wth}{\mathop{\hat{\rm wt}}\nolimits}
\renewcommand{\div}{\mathop{\rm div}\nolimits}
\newcommand{\convx}{\mathop{\rm conv}\nolimits}
\newcommand{\set}[2]{\left\{ #1 \,\middle\vert\, #2 \right\}}
\newcommand{\fa}[2]{\forall #1 :\, #2}
\begin{document}

\maketitle

\begin{center}
{\it
This article is dedicated to Andr\'as N\'emethi
on his 60\textsuperscript{th} birthday}
\end{center}

\begin{abstract}
We compare three naturally occurring multi-indexed
filtrations of ideals on the local ring
of a Newton nondegenerate hypersurface surface singularity with
rational homology sphere, which in many cases are all distinct.
These are the divisorial, the order, and the image filtrations.
These filtrations are indexed by the lattice associated with a toric
partial resolution of the singularity, or equivalently,
the free abelian group generated by the compact facets of the
Newton polyhedron.

We prove that there exists a top dimensional cone
contained in the Lipman cone having the property that the three ideals
indexed by order vectors from this cone coincide.
As a corollary, if a periodic constant can be associated with the
Hilbert series associated with these filtrations on the Lipman cone,
then they coincide.

In some cases, the Poincar\'e series associated with one of these filtrations
has been shown to coincide with a zeta function associated with the
topological type of the singularity. In the end of the article,
we show that this is the case for all three filtrations considered
in the case of a Newton nondegenerate suspension singularity.
As a corollary, in this case,
the zeta function provides a direct method of determining the
Newton diagram from the link.
\end{abstract}

\section{Introduction} \label{s:intro}

Let $(X,0)\subset (\C^3,0)$ be a hypersurface singularity given as the
vanishing set of a function
$f\in\O_{\C^3,0}$ with Newton nondegenerate principal part. Assume further
that the link is a rational homology sphere. Let $\bar G$
be the dual graph to the
compact Newton boundary of $f$. That is, the vertex set $\Nd$ indexes
the compact facets of $\Gamma_+(f)$ so that for $n\in\Nd$ we
have a face $F_n = F_n(f)$, and two vertices are joined by an edge
if and only if the corresponding faces intersect in a segment.
There is a corresponding toric modification of $\C^3$ which yields a
$V$ resolution $\pi:\X\to X$. To each $n\in\Nd$ there corresponds an
irreducible
component of the exceptional $\pi^{-1}(0)$, say $E_n$.
This correspondence is bijective.

For each $n\in\Nd$ we denote by $\div_n$ the valuation on $\O_{X,0}$
associated with the divisor $E_n$. Furthermore, the positive primitive normal
vector to the face $F_n$ provides a valuation $\wth_n$ on $\O_{\C^3,0}$
which induces the order function $\wt_n$ on $\O_{X,0}$ via
\[
  \wt_n(g) = \max \set{\wth_n(h)}{h|_X = g}.
\]
For $g\in \O_{X,0}$ we set $\div g = (\div_n g)_{n\in\Nd}$ and
$\wt g = (\wt_n g)_{n\in\Nd}$
For $k \in \Z^\Nd$ we define
\[
  \F(k) = \left\{ g \in \O_{X,o} \middle| \div g \geq k \right\}, \quad
  \G(k) = \left\{ g \in \O_{X,o} \middle| \wt  g \geq k \right\}.
\]
Similarly, let $\hat \G$ be the divisorial filtration on $\O_{\C^3,0}$
associated with the valuations $\hat \wt_n$, $n\in \Nd$. We define
$\I(k)$ as the image of $\hat\G(k)$ under the natural projection
$\O_{\C^3,0} \to \O_{X,0}$

It follows from these definition that for all $k\in \Z^\Nd$ we have
inclusions
\begin{equation} \label{eq:inclusions}
  \I(k) \subset \G(k) \subset \F(k).
\end{equation}
In general, we may not expect equality here.
In \cite{Lemah}, Lemahieu shows that the $\I$ and $\G$ coincide
if and only if the Newton diagram of $f$ is bi-stellar,
i.e. every pair of compact facets of $\Gamma_+(f)$ shares a point.
In Example 7.6 of \cite{Nem_Poinc}, N\'emethi provides an example
of a Newton nondegenerate singularity whose diagram
contains only two compact faces (in particular, it is bi-stellar)
for which the inclusion $\G \subset \F$ is shown to be proper.

The following theorem is proved in \cref{s:eq}.
\begin{thm} \label{thm:intro}
Let $(X,0)$ be a Newton nondegenerate hypersurface singularity
in $(\C^3,0)$ with a rational homology sphere link.
Then there exists an $|\Nd|$ dimensional polyhedral cone
$C\subset \SR$
(see \cref{def:cone,def:lipman_cone} for $C$ and $\SR$)
satisfying
\[
  \fa{k\in C\cap \Z^\Nd}
     {\F(k) = \G(k) = \I(k)}.
\]
\end{thm}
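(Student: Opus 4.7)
Since \eqref{eq:inclusions} always holds, the theorem reduces to establishing the single inclusion $\F(k) \subset \I(k)$ for every $k$ in a suitable top dimensional sub-cone $C$ of $\SR$. I would reformulate this as a lifting problem on $(\C^3,0)$: given $g\in \F(k)$, I must produce a representative $h\in\O_{\C^3,0}$ of $g$ whose Newton support is contained in the translated polyhedron
\[
  P(k) = \set{\alpha\in\R^3_{\geq 0}}{\fa{n\in\Nd}{\langle\hat n,\alpha\rangle\geq k_n}},
\]
where $\hat n$ denotes the primitive inward normal of $F_n$. Equivalently, starting from any lift $h_0$ of $g$, I need to subtract a suitable multiple of $f$ so that every monomial of the adjusted lift satisfies all of the weight bounds $\wth_n$ simultaneously.

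The procedure I envisage is inductive, removing the monomials of $h_0$ that lie outside $P(k)$ one weight stratum at a time. The essential local input is Newton nondegeneracy: for every $n\in\Nd$ the face polynomial $f_{F_n}$ is smooth as a Laurent polynomial on $(\C^*)^3$, so a Koszul-style argument in the Jacobian ideal of $f_{F_n}$ expresses any monomial supported on or just inside $F_n$ as an element of $(f)$ modulo monomials of strictly greater $\wth_n$-weight. Combined with the hypothesis $\div_n g\geq k_n$, this lets one remove, for each $n$ separately, the $\hat n$-offending monomials of $h_0$ modulo $f$ until the $\wth_n$-weight of the modified lift reaches $k_n$. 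The cone $C$ should then be taken as a top dimensional polyhedral sub-cone of $\SR$ lying strictly inside the Lipman cone, so that $k$ has uniform slack along every extremal ray and the single-$n$ adjustments have enough room to be performed.

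The main obstacle, and where I expect the brunt of the argument to concentrate, is the global coordination of these one-direction modifications: a multiple $uf$ used to repair the $\wth_n$-weight has its own Newton support, and some of its monomials may violate the bound $\wth_{n'}$ for some $n'\neq n$. The purpose of the strict interior condition defining $C$ is to guarantee that each such repair strictly decreases a well-founded defect measure, for instance $\sum_n \max(0,k_n-\langle\hat n,\alpha\rangle)$ summed over the Newton support of the current lift, so that the corrections terminate as a formal power series and then stabilize in $\O_{\C^3,0}$ by an Artin or $(f)$-adic completeness argument. Establishing this strict decrease is essentially a combinatorial statement about the Newton fan, and I would expect it to interact cleanly with the tree structure of $\bar G$ forced by the rational homology sphere hypothesis, which prevents cycles among the faces and should make the weight induction well-founded.
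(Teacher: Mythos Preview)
Your overall strategy is right: reduce to $\F(k)\subset\I(k)$, view it as a lifting problem, and repeatedly subtract multiples of $f$ to push the Newton support of a lift into $P(k)$. The tree structure of $\bar G$ is indeed what makes the induction well-founded. But two concrete ingredients are missing.

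First, the local correction step is not a Koszul/Jacobian argument. What actually drives it is \cref{lem:EGZ}: if $\wth_n\hat g<\div_n(\hat g|_X)$, then $f_n$ \emph{divides} $\hat g_n$ in the Laurent polynomial ring, so the correction is simply $h=\hat g_n/f_n$ and $\hat g-hf$ has strictly larger $\wth_n$-weight. The serious issue you do not address is that $h$ is a priori only a Laurent polynomial, whereas you need $hf\in\O_{\C^3,0}$. The paper handles this via \cref{lem:axis}: if $F_n(f)$ meets the $x_jx_k$-plane then $\wth_i(h)\geq 0$. This is why the induction must start at a vertex $n_0$ whose facet meets \emph{all three} coordinate hyperplanes (such an $n_0$ exists by the classification in \cite{Newton_nondeg}); for that vertex $h$ is automatically a polynomial. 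For subsequent vertices on the arms, only two of the three nonnegativity conditions come for free, and the third has to be forced by the cone condition.

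Second, your ``strict decrease of a defect measure on a cone strictly inside $\SR$'' is too vague to carry the global step. The paper's cone $C$ (\cref{def:cone}) is not just an interior sub-cone of $\SR$; it is defined by face-matching conditions tailored so that the key geometric \cref{lem:containment} holds: for $Z\in C$, any dilation $\rho F_n(f)+u$ that fits inside one face $F_n(Z)$ automatically yields $\rho\Gamma_+(f)+u\subset\Gamma_+(Z)$. This lemma is what guarantees, at each inductive step along the tree, both that the corrective $hf$ has nonnegative exponents in the remaining variable and that subtracting it does not destroy the weight bounds already achieved for vertices closer to $n_0$. Without an analogue of this containment lemma and the specific cone that makes it true, the coordination you describe cannot be completed.
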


In \cref{s:susp} we define the zeta function and prove the following

\begin{thm} \label{thm:susp}
If $(X,0)$ is a Newton nondegenerate suspension singularity
with rational homology sphere link, then $\I, \G, \F$ all coincide.
Furthermore, the associated Poincar\'e series coincides with the reduced zeta
function $Z^\Nd_0(t)$ with respect to nodes
(see def. \cref{def:zeta}), which is given by the formula
\begin{equation} \label{eq:susp_poinc}
  \frac{1-t^{\wth f}}{(1-t^{\wth x})(1-t^{\wth y})(1-t^{\wth z})}.
\end{equation}
\end{thm}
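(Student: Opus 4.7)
My plan is to prove the filtration equalities and the Poincar\'e series formula via three ingredients: bi-stellarity of the Newton diagram, a Koszul computation for $P_\I$, and extension of cone-wise agreement to global equality using \cref{thm:intro}.

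Writing a suspension as $f(x,y,z) = z^N + g(x,y)$ for some isolated plane curve $g \in \C\{x,y\}$, the compact facets of $\Gamma_+(f)$ are in bijection with compact edges of $\Gamma_+(g) \subset \R^2$: each edge yields the convex hull with the apex $(0,0,N)$. Every compact facet thus contains $(0,0,N)$, so the Newton diagram is bi-stellar, and Lemahieu's theorem cited in the introduction gives $\I(k) = \G(k)$ for all $k \in \Z^\Nd$. Next, the filtration $\hat\G$ on $\O_{\C^3,0}$ is monomial with Poincar\'e series $((1-t^{\wth x})(1-t^{\wth y})(1-t^{\wth z}))^{-1}$. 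Since the initial forms with respect to each $\wth_n$ live in the integral ring $\C[x,y,z]$, no cancellation can occur under multiplication, and $\wth_n(fh) = \wth_n(f) + \wth_n(h)$ for every $n$ and every nonzero $h$. The resulting strict short exact sequence of filtered modules
\[
  0 \longrightarrow \O_{\C^3,0}(-\wth f) \xrightarrow{\,\cdot f\,} \O_{\C^3,0} \longrightarrow \O_{X,0} \longrightarrow 0,
\]
with $\O_{X,0}$ carrying the image filtration $\I$, then yields the Koszul identity \eqref{eq:susp_poinc} for $P_\I$.

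To conclude $\F(k) = \I(k)$ for all $k$, I apply \cref{thm:intro} to obtain a top-dimensional cone $C \subset \SR$ inside the Lipman cone on which $\F = \G = \I$. The Koszul formula shows that $h_\I(k) := \dim_\C(\O_{X,0}/\I(k))$ is quasi-polynomial on the Lipman cone with period determined by the weight data, so its periodic constant is well-defined and explicitly computable. Combined with the inclusion $\I \subset \F$, which gives $h_\F(k) \leq h_\I(k)$, and the equality $h_\F = h_\I$ on $C \cap \Z^\Nd$, the corollary to \cref{thm:intro} announced in the abstract promotes cone-wise agreement to the global equality $\F = \I$ of filtrations.

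Finally, the reduced zeta function $Z^\Nd_0(t)$ is, by \cref{def:zeta}, a combinatorial rational function read off the dual graph $\bar G$. For a suspension, the plumbing graph is a cyclic cover of the embedded resolution graph of $g$, and the nodes of $\bar G$ correspond precisely to the compact facets of $\Gamma_+(f)$, indexed by $\Nd$. Substituting the explicit weight data $\wth x, \wth y, \wth z, \wth f$ into the definition of $Z^\Nd_0$ and simplifying using the cyclic-cover combinatorics recovers the right-hand side of \eqref{eq:susp_poinc}. The main obstacle I foresee lies in the previous paragraph: extending equality from a top-dimensional cone to the entire lattice requires making rigorous the multi-index periodic-constant framework, and this extension depends sensitively on the explicit Koszul structure of $P_\I$ obtained earlier.
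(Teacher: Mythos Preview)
Your argument for $\I = \G$ via bi-stellarity and the Poincar\'e series formula for $P_\I$ matches the paper. The gap is in your route to $\F = \I$.

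You invoke \cref{thm:intro} and then attempt to promote equality on the cone $C$ to global equality using the ``corollary announced in the abstract.'' That corollary asserts only that the \emph{periodic constants} of the Hilbert series coincide, not that the filtrations themselves agree on all of $\Z^\Nd$. Agreement of $h_\F$ and $h_\I$ on a top-dimensional subcone, together with the inequality $h_\F \leq h_\I$, does not force $h_\F(k) = h_\I(k)$ for every $k$; the Hilbert functions could still differ outside $C$. You acknowledge this as the main obstacle, and indeed it is: the periodic-constant machinery does not close this gap.

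The paper sidesteps the issue entirely with a direct and much simpler argument. For a suspension, every compact facet $F_n$ contains the apex $(0,0,N)$ and a segment of $\Gamma_+(f_0)$ in the $xy$-plane, hence meets all three coordinate hyperplanes. By \cref{lem:axis}, the Laurent quotient $h = \hat g_n / f_n$ from \cref{lem:EGZ} is then an honest polynomial for \emph{every} $n$, so one can iterate $\hat g \mapsto \hat g - hf$ to force $\wth_n \hat g \geq \div_n g$ for each $n$ separately. This yields $\wt_n = \div_n$ as order functions, hence $\G = \F$ globally, without ever invoking \cref{thm:intro} or periodic constants.

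Finally, your treatment of the zeta function identity is a sketch, not a proof. The paper devotes \cref{thm:zeta} to this computation, which requires the explicit presentation of $H$ from \cref{lem:generators} to identify which monomials in the expansion of $Z(t)$ survive in $Z_0(t)$; the ``cyclic-cover combinatorics'' you allude to is precisely this nontrivial bookkeeping.
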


\begin{ack}
I discovered the theorems proved in this article during the PhD
program at Central European University under the supervision of
Andr\'as N\'emethi. I would like to thank Andr\'as for the many
fruitful discussions we have had, and for suggesting to me many
interesting proeblems related to singlarity theory.
\end{ack}

\section{Associated power series and the search for an equation} \label{s:power}
For a better understanding of these filtrations, 
the associated \emph{Hilbert} and \emph{Poincar\'e} series 
are introduced:
\[
  H^\F(t) = \sum_{k\in\Z^\Nd} h^\F_k t^k,\quad
  P^\F(t) = \sum_{k\in\Z^\Nd} p^\F_k t^k
          = -H^\F(t) \prod_{n\in\Nd} (1-t_n^{-1}),
\]
where $h^\F_k = \dim_\C \O_{X,0} / \F(k)$.
Similar definitions are made for the other filtrations.

These series provide very strong numerical invariants of the analytic structure
of the singularity. Two leading questions in the theory of surface
singularities are, on one hand, whether numerical analytic invariants 
such as these can be characterized by the topology of $(X,0)$, and on
the other, whether numerical invariants
can be used to construct
variables and equations realizing singularities with a given topology.

The divisorial filtration $\F$ is intrinsic to the singularity $(X,0)$, and
therefore one may hope for it to have the most direct relation to the link,
whether or not the singularity $(X,0)$ is a hypersurface.
Indeed, in \cite{Nem_Poinc}, N\'emethi provides a topological invariant, the
zeta function, which coincides with $P^\F$
in many cases, e.g. for rational singularities and
minimally elliptic singularities whose minimal resolution is good.
These are examples of classes of singularities whose intrinsic analytic
structure has restrictions. The main identity in \cite{Nem_Poinc}
is not true
for arbitrary singularities, but has been proved for singularities
of splice-quotient type \cite{Nem_coh_splice}.

On the other hand, the filtrations $\I$ and $\G$ are given in terms of
the embedding of the singularity $(X,0) \subset (\C^3,0)$.
It is not clear how to relate the topology of $(X,0)$, or its embedded
type to the Hilbert or Poincar\'e series associated with these filtration.
On the other hand, as we shall see, there are cases when the knowledge of
the Poincar\'e series can be used to rebuild the singularity, or a similar one.

There are no relations between the monomials of the ring
$\O_{\C^3,0}$, and the filtration $\Gh$ is given by
a grading of these monomials.
As a result, one computes easily
(see also Proposition 1 of \cite{Eb_Gus_multi}):
\[
  P^{\Gh}(t) = \frac{1}{(1-t^{\wth x})(1-t^{\wth y})(1-t^{\wth z})}.
\]
By a result of Lemahieu \cite{Lemah}, this gives
\[
  P^\I(t) = (1-t^{\wth f}) P^\Gh
          = \frac{1-t^{\wth f}}{(1-t^{\wth x})(1-t^{\wth y})(1-t^{\wth z})}.
\]
If we assume that $f$ has a \emph{convenient} Newton diagram (meaning in
our case that
$f(x,y,z)$ contains monomials of the form $x^a$, $y^b$, $z^c$ with nonzero
coefficients), then
the arguments of section 5 of \cite{Lemah} show that this series in fact
determines the Newton polyhedron (it is also determined by it).
In particular, if this series can be computed using only the topological
type of $(X,0)$, then one obtains a method of determining from
only the topology of $(X,0)$ an equation for a singularity with that
topological type.
We shall see in \cref{s:susp} that this program actually runs in
the case of suspension singularities with rational homology sphere.

In fact, in \cite{Newton_nondeg},
Braun and N\'emethi found, using totally different methods,
that when the link of a Newton nondegenerate
hypersurface singularity is a rational homology sphere, then the link
determines the Newton diagram, up to permutation of the coordinates.
Nonetheless, the above route identifies a more conceptual way
of finding an equation determining a given topology.

\section{Newton nondegeneracy}

In this section we define the Newton polyhedron and its normal fan.
We do not subdivide the normal fan to obtain
a smooth variety. As a result, we obtain a partial resolution of
$(X,0)$ which has at most cyclic quotient singularities.
This construction is described in details in \cite{Oka}.

Let $f$ be a convergent power series in three variables
given as $f(x) = \sum_{u\in \N^3} a_u x^u$. We define the \emph{support}
of $f$ as
\[
  \supp(f) = \set{u\in \N^3}{a_u \neq 0}
\]
and the \emph{Newton polyhedron} of $f$ as
\[
  \Gamma_+(f) = \convx(\supp(f) + \R^3_{\geq 0}).
\]
A \emph{facet} of $\Gamma_+(f)$ is a face of dimension $2$.
We index the compact facets of $\Gamma_+(f)$ by a set $\Nd$, which
we take as the vertex set of a graph $\bar G$ as in the introduction.
We define the graph $\bar G^*$ similarly, but we allow in this case noncompact
facets as well. We denote the vertex set of $\bar G^*$ by $\Nd^*$.

To a vertex $n\in \Nd^*$, there corresponds a facet
$F_n \subset \Gamma_+(f)$. To each such $n$ there corresponds a unique
primitive integral linear functional $\ell_n:\R^n \to \R$ having
$F_n$ as its minimal set in $\Gamma_+(f)$.

We identify the set of integral linear functionals $\ell:\Z^3\to \Z$
taking nonnegative values on $\N^3$ with $\N^3$ via the standard intersection
product. Thus, for each $n\in \Nd$, the functional $\ell_n$ corresponds to
the primitive normal vector to $F_n$ pointing into $\Gamma_+(f)$.
For any face $F\subset \Gamma_+(f)$ (of any dimension) denote by
\[
  f_F = \sum\set{a_u x^u}{u \in F \cap \supp(f)}.
\]

\begin{definition}
The function $f$ is \emph{Newton nondegenerate} if for any
compact face $F \subset \Gamma_+(f)$, the affine scheme
\[
  \set{x\in (\C^*)^3}{f_F(x) = 0}
\]
is smooth.
\end{definition}

\begin{definition}
The \emph{normal fan}, denoted by $\triangle_f$
of the polyhedron $\Gamma_+(f)$ subdivides the
positive octant $\R_{\geq 0}$ as follows.
\begin{itemize}

\item
The one dimensional cones are generated by $\ell_n$ for $n\in \Nd^*$. 

\item
A two dimensional cone in the normal fan is generated by two vectors
$\ell_n$ and $\ell_{n'}$ where $n,n'$ are adjacent in $\bar G^*$.
Equivalently, for any segment $S = F_n \cap F_n'$, with $\dim S = 1$,
there is a cone consisting of those functionals whose minimal value
on $\Gamma_+(f)$ is taken on all of $S$.

\item
The above construction splits the positive octant into chambers, whose
closures are the three dimensional cones in the normal fan.
Equivalently, to each vertex $u\in \Gamma_+(f)$, there is a three dimensional
cone in the normal fan consisting of those linear functions whose minimum
on $\Gamma_+(f)$ is realized at the point $u$.

\end{itemize}

Denote by $Y_f$ the toric variety associated with $\triangle_f$.
Then we have a canonical morphism $Y_f \to \C^3$. 
Denote by $\bar X \subset Y_f$ the strict transform of $X$.
Denote by $O_n$ the orbit in $Y_f$ corresponding to the cone
generated by $\ell_n$, and by $E_n$ the closure of $O_n\cap \bar X$.
\end{definition}

\section{The intersection lattice}

If $f$ is Newton nondegenerate, then the strict transform $\bar X$
has transverse
intersections with all orbits in $Y_f$, meaning that, if $O$ is an orbit,
then the scheme theoretic intersection $\bar X \cap O$ is smooth.
Furthermore, the divisors $E_n$ are irreducible \cite{Oka}.

We will identify the lattice $\bar L = Z^\Nd$ with the set of divisors on
$\bar X$ supported on the exceptional divisor, that is, the abelian
group freely generated by the irreducible divisors $E_n$ for $n\in \Nd$.
An intersection product is obtained on this lattice as follows. Take
a resolution $\phi:\X\to\bar X$ which is an isomorphism outside the singular
set $\bar X_{\mathrm{sing}}$.
In particular, there is a well defined intersection theory
on $\X$.
For any curve $C \subset \bar X$, the pullback $\phi^*E$ is
defined as $\tilde C + C_{\mathrm{exc}}$,
where $\tilde C$ is the strict transform of
$C$, and $E$ is the unique rational divisor supported on
$\phi^{-1}(\bar X_{\mathrm{sing}})$,
satisfying $(E,C_{\mathrm{exc}}) = 0$ for any divisor
$E$ supported on $\phi^{-1}(\bar X_{\mathrm{sing}})$.
We then set $(C,C') = (\phi^*C, \phi^*C')$.

\begin{definition} \label{def:t_alpha}
We refer to $\bar L$ with the intersection form defined above as the
\emph{intersection lattice}. Elements of $\bar L$, or or
$\bar L_\R = \bar L \otimes \R$
are referred to as \emph{cycles}.
Let $n \in \Nd$ and $n'\in\Nd^*$.
We set $e_n = E_n^2 = (E_n, E_n)$. Furthermore
\begin{itemize}

\item
Denote by $t_{n,n'}$
the length of the segment $F_n \cap F_{n'}$, that is, the
number of relative interior integral points on this segment.
In particular, $t_{n,n'} = 0$ if and only if $n,n'$ are not adjacent.

\item
Denote by $\alpha_{n,n'}$ the index of the lattice generated by
$\ell_n$ and $\ell_{n'}$ in its saturation in $\Hom(\bar L, \Z)$.

\end{itemize}
\end{definition}

\begin{prop} \label{prop:intersection}
The intersection lattice is negative definite. In particular, we
have $e_n < 0$.
Let $n,n' \in \Nd$ be adjacent in $\bar G$. Then
$(E_n, E_{n'}) = t_{n,n'}/\alpha_{n,n'}$.
Furthermore, for any $n\in \Nd$, we have
\[
  e_n \ell_n + \sum_{n'} \frac{t_{n,n'}}{\alpha_{n,n'}} \ell_{n'} = 0.
\]
\end{prop}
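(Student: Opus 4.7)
The plan is to establish the three parts in the order (2), (3), (1): part (2) is a local toric computation, part (3) follows from (2) via the projection formula, and part (1) reduces by pullback to Mumford's theorem.

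For part (2), I would identify $E_n \cap E_{n'}$ scheme-theoretically as $\bar X \cap O_{n,n'}$, where $O_{n,n'}$ is the one-dimensional torus orbit of $Y_f$ associated with the two-dimensional cone spanned by $\ell_n$ and $\ell_{n'}$. Newton nondegeneracy applied to the face polynomial $f_{F_n\cap F_{n'}}$ shows this intersection is a reduced set of $t_{n,n'}$ points, with $\bar X$ meeting $O_{n,n'}$ transversally. Along $O_{n,n'}$ the ambient $Y_f$ has a cyclic quotient singularity of index $\alpha_{n,n'}$, so passing to a resolution $\phi:\tilde X \to \bar X$ and computing in a local toric chart, each of the $t_{n,n'}$ points contributes $1/\alpha_{n,n'}$ to the intersection, giving $(E_n,E_{n'}) = t_{n,n'}/\alpha_{n,n'}$.

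For part (3), I would invoke the projection formula. Since $\pi_* E_n = 0$, one has $(E_n, \pi^* D)_{\bar X} = 0$ for every divisor $D$ on $X$. Applying this to $D = \div x_i|_X$ for $i=1,2,3$, the divisor of the coordinate $x_i$ on $Y_f$ has the standard toric form $\sum_{m} \ell_m(e_i) D_m$ where $m$ ranges over the one-dimensional cones of $\triangle_f$, i.e., over $\Nd^*$, and $D_m$ is the corresponding orbit closure. Restricting to $\bar X$, the curve $D_m \cap \bar X$ is $E_m$ for $m\in\Nd$ and a possibly noncompact curve for $m \in \Nd^*\setminus\Nd$; in both cases the argument of (2) applies verbatim to show $(E_n, D_m \cap \bar X) = t_{n,m}/\alpha_{n,m}$ for $m\neq n$ (with the convention that $t_{n,m}=0$ when $m$ and $n$ are not adjacent). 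Expanding $0 = (E_n, \div \pi^* x_i)$ yields the $i$-th coordinate of the claimed identity, and collecting the three coordinates produces (3).

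For part (1), I would compose $\pi$ with a resolution $\phi:\tilde X \to \bar X$ of the cyclic quotient singularities to obtain a resolution of $(X,0)$, whose exceptional intersection form is negative definite by Mumford's theorem. By the very definition of the intersection form on $\bar L$, the map $E_n \mapsto \phi^* E_n$ is an isometric embedding into the exceptional $\Q$-lattice of $\tilde X$; the images are linearly independent because $\phi_*\phi^* E_n = E_n$ and the $E_n$ form a $\Z$-basis of $\bar L$. Restriction of a negative definite form to a linearly independent subset is negative definite, so the claim on $\bar L$ follows, and in particular $e_n = (E_n,E_n) < 0$. The principal technical hurdle is the local analysis in part (2): one must verify carefully in a cyclic-quotient toric chart that each of the $t_{n,n'}$ transverse intersections contributes exactly $1/\alpha_{n,n'}$ to the intersection number, using both the transversality from Newton nondegeneracy and the precise lattice description of $\alpha_{n,n'}$.
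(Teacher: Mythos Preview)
Your proposal is correct and aligns with the paper's approach. The paper's own proof is essentially a pair of citations: negative definiteness via embedding into the resolution lattice (your part (1) is exactly this, with Mumford's theorem named explicitly), and the intersection formula and linear relation deferred to \cite{Oka} and \cite{Newton_nondeg}. Your parts (2) and (3) unpack precisely the toric computation and projection-formula argument that those references contain, so you are filling in what the paper leaves implicit rather than taking a different route.
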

\begin{proof}
The intersection lattice can be seen as a subspace of the intersection
lattice associated with a resolution of $(X,0)$, which is negative
definite, see e.g. \cite{Nemethi_FL}.
The rest follows from \cite{Oka}, see also \cite{Newton_nondeg}.
\end{proof}

\section{Cycles, Newton diagrams and the cone}

In this section we define the cone $C$ which appears in
\cref{thm:intro}. This requires some analysis of the geometry of
Newton diagarams associated with arbitrary cycles.
\Cref{lem:cone} shows that $C$ has the right properties, that is,
it is a top dimensional rational cone contained in the Lipman cone.
\Cref{lem:containment} is a workhorse used in the proof
of \cref{thm:intro}.

\begin{definition} \label{def:lipman_cone}
The \emph{Lipman cone} $\SR$ is the set of vectors $Z\in \bar L_\R$ satisfying
$(Z, E) \leq 0$ for any effective cycle $E$.
\end{definition}

It is well known that the Lipman cone is an $|\Nd|$-dimensional simplicial
cone generated by elements with all coordinates positive.

We associate to a cycle $Z\in \bar L_\R$ the \emph{Newton polyhedron}
\[
  \Gamma_+(Z) = \left\{ u \in \R_{\geq0}^3
             \middle| \forall n\in\Nd,\, \ell_n(u) \geq m_n(Z) \right\}
\]
where the $m_n$ are defined by $Z = \sum_{n\in\Nd} m_n(Z) E_n$.
For a subgraph $A$ of $\bar G$ (or a subset of $\Nd$)
let $\Nd_A$ be the set of vertices either in $A$ or
connected to a vertex in $A$. For a cycle $Z$ let
\[
  \Gamma^A_+(Z) = \left\{ u \in \R_{\geq0}^3
             \middle| \forall n\in \Nd_A,\, \ell_n(u) \geq m_n(Z) \right\}
\]
and for $a\in A$, denote by $F^A_a(Z)$ the corresponding face of this
polyhedron, given by
\[
  F^A_a(Z) = \set{u \in \Gamma_+^A(Z)}{\ell_a(u) = m_n(Z)}.
\]
Note that we may have $F^A_a(Z) = \emptyset$.

\begin{definition} \label{def:cone}
Let $C$ be the set of divisors $Z\in \bar L$ satisfying
\begin{itemize}
\item
$\emptyset \neq F_n^{\{n\}}(Z) = F_n(Z)$ for all $n\in\Nd$.
\item
If $n,n'\in\Nd$ are adjacent in $\bar G$
and $\rho (F_n(f) \cap F_{n'}(f)) + u \subset F_n(Z)\cap F_{n'}(Z)$ 
for some $\rho \geq 0$ and $u\in\R^3$
then $\rho F_n(f) + u \subset F_n(Z)$.
\end{itemize}
\end{definition}

\begin{lemma} \label{lem:cone}
$C$ is a top dimensional polyhedral cone contained in the Lipman cone
$\SR$.
\end{lemma}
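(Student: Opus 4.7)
The plan is to verify three assertions: containment $C \subset \SR$, polyhedrality of $C$, and $|\Nd|$-dimensionality. Containment in the Lipman cone is the quickest to establish. For $Z \in C$ and $n \in \Nd$, the first defining property of $C$ supplies a point $u \in F_n(Z) \subset \R^3_{\geq 0}$, so $\ell_n(u) = m_n(Z)$, $\ell_{n'}(u) \geq m_{n'}(Z)$ for $n' \in \Nd$ adjacent to $n$ in $\bar G$, and $\ell_{n'}(u) \geq 0$ for $n' \in \Nd^* \setminus \Nd$ adjacent to $n$ in $\bar G^*$ (since these $\ell_{n'}$ have nonnegative components). Substituting $e_n m_n(Z) = e_n \ell_n(u)$ into $(Z, E_n) = e_n m_n(Z) + \sum_{n' \in \Nd,\, n' \sim n}(t_{n,n'}/\alpha_{n,n'}) m_{n'}(Z)$ and then applying the linear relation from \cref{prop:intersection} (with the sum running over all $\bar G^*$-neighbors of $n$) rewrites $(Z, E_n)$ as a sum of nonpositive terms, yielding $(Z, E_n) \leq 0$. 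Since every effective cycle is a nonnegative combination of the $E_n$, this places $Z$ in $\SR$.

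For polyhedrality and top-dimensionality, I would work in the combinatorial chamber of $\bar L_\R$ on which $\Gamma_+(Z)$ has the same combinatorial type as $\Gamma_+(f)$, so that each vertex of $F_n^{\{n\}}(Z)$ depends linearly on $(m_n(Z))$. In this chamber, the first defining property becomes a finite list of linear inequalities demanding that non-neighbors of $n$ give non-binding constraints at every vertex of $F_n^{\{n\}}(Z)$. For the second property, the key identity is the following: writing $p, q$ for the endpoints of $s := F_n(f) \cap F_{n'}(f)$, any translation $u$ that makes $\rho s + u$ fit inside a parallel segment of length $L|q-p|$ (where $L = |F_n(Z) \cap F_{n'}(Z)|/|s|$) is necessarily of the form $u = (L - \rho - \mu)p + \mu q + c$ for some $\mu \in [0, L-\rho]$ and some offset $c$ between the two parallel planes; then for any $x \in F_n(f)$,
\[
\rho x + u = L\left(\tfrac{\rho}{L}x + \tfrac{L-\rho-\mu}{L}p + \tfrac{\mu}{L}q\right) + c,
\]
which is the translate by $c$ of an $L$-fold scaling of a convex combination of points of $F_n(f)$. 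Thus the universally quantified second property reduces to the single linear inequality that the maximal scaled-and-translated copy $L F_n(f) + c$ is contained in $F_n(Z)$. A natural interior witness is $Z^* = N Z_0$, where $m_n(Z_0) = m_n(f)$ and $N$ is taken large: then $\Gamma_+(Z^*) = N \Gamma_+(f)$, so $F_n(Z^*) = N F_n(f)$ and the second property holds by the identity above with $L = N$, $c = 0$, while the first holds because non-adjacent facets do not bound $F_n(f)$ in $\Gamma_+(f)$; for $N$ large the relevant inequalities remain strict under small perturbations.

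The main obstacle is the reduction of the universally-quantified second property to a single linear inequality, which is delivered by the convex-combination identity above and rests on $F_n(Z)$ being convex. A secondary delicate point is that at $Z^* = NZ_0$ the extremal instance is saturated ($L F_n(f) + c = F_n(Z^*)$), so to expose a genuinely $|\Nd|$-dimensional neighborhood of $Z^*$ inside $C$ one must identify sufficiently many independent perturbations $V \in \bar L$ for which $F_n(Z^* + \varepsilon V)$ grows at least as fast as its shared-edge length demands. The linear dependence of both the face $F_n(Z)$ and its edge $F_n(Z) \cap F_{n'}(Z)$ on $(m_n(Z))$ within the combinatorial chamber supplies these $|\Nd|$ independent directions, completing the argument.
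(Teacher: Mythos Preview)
Your Lipman-cone argument is correct and is exactly the paper's argument. Your reduction of the second defining condition of $C$ to the single containment $L\,F_n(f)+c\subset F_n(Z)$ via the convex-combination identity is a nice observation, and it does show that, within a fixed combinatorial chamber, the conditions are finitely many linear inequalities.

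There are, however, two genuine gaps.

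\textbf{The cone property is not addressed.} You never verify that $C$ is closed under addition. The paper does this directly via the Minkowski-sum identity $F^A_n(Z+Z')=F^A_n(Z)+F^A_n(Z')$ and then checks both defining conditions for $Z+Z'$. Without this, your chamber argument shows at best that $C$ meets one chamber in a polyhedron, not that $C$ is a polyhedral cone. (Indeed $C$ is \emph{not} confined to the chamber where $\Gamma_+(Z)$ and $\Gamma_+(f)$ share a combinatorial type.)

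\textbf{Top-dimensionality fails at your witness.} Your candidate $Z^*=NZ_0$ lies on the boundary of $C$, not in its interior. You acknowledge that the extremal instance $L\,F_n(f)+c=F_n(Z^*)$ is saturated, but your final sentence (``the linear dependence \ldots\ supplies these $|\Nd|$ independent directions'') is an assertion, not an argument. In fact, once some $F_n(f)$ has three or more neighbours, generic perturbations of $Z_0$ violate condition~2: if $n$ has neighbours $n_1,n_2,n_3$ and you decrease $m_{n_1}$ slightly, the edge $F_n(Z)\cap F_{n_1}(Z)$ lengthens, so $L_1>1$, while the opposite vertex $v_{23}(Z)$ is unchanged; the image $L_1v_{23}+c_1$ then lies \emph{outside} $F_n(Z)$ and the containment $L_1F_n(f)+c_1\subset F_n(Z)$ fails. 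So there is no open ball about $NZ_0$ inside $C$, and you have not produced an interior point.

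The paper avoids this by constructing an interior point explicitly: pick a central vertex $n_0$ whose facet meets all coordinate hyperplanes, set $m_{n_0}(Z)=\wth_{n_0}f$, and then walk outward along each arm, at each step choosing $m_{n_{k,j+1}}(Z)$ to be \emph{strictly less} than the minimum of $\ell_{n_{k,j+1}}$ on the already-constructed face $F_{n_{k,j-1}}^{\{n_{k,j-1}\}}(Z)$, by an independent $\varepsilon$. This recursive slack forces every $F_n^{\{n\}}(Z)$ to strictly contain what condition~2 demands, and varying the $\varepsilon$'s yields an open set in $C$. If you want to salvage your approach, you would need to exhibit a perturbation scheme of this flavour rather than appeal to abstract linear dependence.
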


\begin{proof}
The definition of $C$ is equivalent to a finite number of
rational inequalities, and so the set $C$ is a rational polyhedron.
Furthermore, assume that $\lambda\in \R_{\geq 0}$ and $Z,Z'\in C$.
Then $F_n^A(\lambda Z) = \lambda F_n^A(Z)$ for any $A\subset \Nd$,
which shows $\lambda Z \in C$.
Furthermore, $F_n^A(Z+Z') = F_n^A(Z) + F_n^A(Z')$. Thus,
if $n,n'\in \Nd$ are adjacent in $\bar G$, and
\[
  \rho>0, \quad u\in \R^3, \quad
  \rho(F_n(f)\cap F_{n'}(f)) + u \subset F_n(Z+Z')\cap F_{n'}(Z+Z'),
\]
then there are $\rho_1, \rho_2 > 0$, $u_1, u_2 \in \R^3$ so that
\[
\begin{split}
  \rho_1(F_n(f)\cap F_{n'}(f)) + u_1 &\subset F_n(Z)\cap F_{n'}(Z),\\
  \rho_2(F_n(f)\cap F_{n'}(f)) + u_2 &\subset F_n(Z')\cap F_{n'}(Z'),
\end{split}
\]
and we get
\[
  \rho F_n(f) + u = (\rho_1 F_n(f) + u_1) + (\rho_2 F_n(f) + u_2)
  \subset F_n(Z) + F_{n'}(Z) = F_n(Z+Z').
\]
As a result, we find $Z,Z' \in C$, and so $C$ is a cone.

Next, we prove $C\subset \SR$. Let $n\in \Nd$ and choose an
$u \in F_n^{\{n\}}$, which is nonempty by assumption. We find
\[
  (E_n, Z) = e_n m_n(Z) + \sum_{n'\in\Nd_n} \frac{t_{n,n'} m_{n'}(Z)}
                                                  {\alpha_{n,n'}}
    \leq e_n \ell_n(u) + \sum_{n'\in\Nd_n} \frac{t_{n,n'} \ell_{n'}(u)}
                                                 {\alpha_{n,n'}}
    = 0.
\]

Finally, we prove that $C$ has dimension $|\Nd|$.
We will use the terminology introduced in \cite{Newton_nondeg}, in
particular, central faces and edges, arms and hands.
Let $n_0 \in\Nd$ be a vertex so that $F_{n_0}(f)$ intersects all the
coordinate planes. Then the complement $\Nd\setminus n_0$ is a disjoint
union of parts of arms.
Let the vertices of the $k$-th partial arm have vertices
$n_{k,j}$ in such a way that $n_{k,1}$ is adjacent to $n_0$, and
for $j\geq 2$, $n_{k,j}$ is adjacent to $n_{k,j-1}$.
We also set $n_{k,0} = n_0$ for any $k$.

Define $Z\in \bar L_\R$ recursively as follows.
Start by choosing $\varepsilon>0$ very small
and set $m_{n_0}(Z) = \wth_{n_0} f$ and
$m_{n_{k,1}}(Z) = \wth_{n_{k,1}}(f) - \varepsilon$.
Note that at this point we have a well defined facet
\[
  F_{n_0}^{\{n_0\}}(Z)
  = \set{u\in\R_{\geq 0}^3}
        {
           \ell_{n_0} = m_{n_0}(Z),\quad
           \fa{k}{\ell_{n_{k,1}} \geq m_{n_{k,1}}(Z)}
        }
\]
and it follows from this construction that this face intersects each
coordinate hyperplane in a segment of positive length.

Next, assume that we have defined $m_{n_{k,j}}$ for $0<j\leq j_0$
for some $j_0 > 0$.
In particular, the facet $F_{n_{k,j_0-1}}^{\{n_{k,j_0-1}\}}(Z)$
is well defined similarly as above.
Unless $n_{k,j_0}$ is a hand, define
\[
  m_{n_{k,j_0+1}}(Z)
  = \min\set{\ell_{n_{k,j_0+1}}(u)}
            {u\in F_{n_{k,j_0-1}}^{\{n_{k,j_0-1}\}}(Z)} - \varepsilon.
\]
In particular, the face
$F_{n_{k,j_0}}^{\{n_{k,j_0}\}}(Z)$ is now well defined.

Note now that if $n$ is a node, and $F_n^{\{n\}}(Z)$ is already well defined,
then the value $m_{n_{k,j_0+1}}(Z)$ is smaller than the minimal value
of $\ell_{n_{k,j_0+1}}$ on $F_n^{\{n\}}(Z)$. Therefore, we find
\[
  \fa{n\in \Nd}{F_n^{\{n\}}(Z) = F_n(Z)},
\]
proving the first condition for $Z \in C$.
The second condition follows similarly.

Finally note that at every step in the definition of $Z$, we may as well
have used a differt epsilons, meaning that a generic small perturbation
of $Z$ is also in $C$. It follows that $C$ contains an open subset
of $\bar L_\R$, and so has highest dimension possible, $|\Nd|$.
\end{proof}

\begin{rem} \label{rem:nonzero}
By the above lemma, if $Z \in C$, then either $Z = 0$, or all coordinates
of $Z$ are positive, that is, $m_n(Z) > 0$ for all $n\in \Nd$, since
this holds for any element of the Lipman cone.
\end{rem}

\begin{lemma} \label{lem:containment}
Let $Z\in C$, $\rho\in\R_{>0}$ and $u\in\R^3$ satisfying
$\rho F_n(f) + u \subset F_n(Z)$ for some $n\in\Nd$. Then
$\rho \Gamma_+(f) + u \subset \Gamma_+(Z)$.
\end{lemma}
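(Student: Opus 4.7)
The goal $\rho\Gamma_+(f)+u\subset\Gamma_+(Z)$ reduces, since both polyhedra share the recession cone $\R^3_{\geq 0}$ and $\Gamma_+(Z)$ is cut out from $\R^3_{\geq 0}$ by the halfspaces $\{\ell_{n'}\geq m_{n'}(Z)\}$ for $n'\in\Nd$, to verifying the scalar inequality
\[
  \rho\ell_{n'}(F_{n'}(f))+\ell_{n'}(u)\geq m_{n'}(Z)
\]
for every $n'\in\Nd$: this is precisely the statement that the minimum of $\ell_{n'}$ on $\rho\Gamma_+(f)+u$, attained on the face $\rho F_{n'}(f)+u$, dominates $m_{n'}(Z)$. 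The coordinate-axis inequalities follow similarly from the common recession cone combined with $\rho F_n(f)+u\subset F_n(Z)\subset\R^3_{\geq 0}$.

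My plan is to prove these inequalities by induction on the graph distance $d(n,n')$ in $\bar G$, propagating along paths a stronger statement $\rho F_{n'}(f)+u\subset F_{n'}(Z)$. The base $n'=n$ is the hypothesis itself; for $n'$ adjacent to $n$, the scaled-translated edge $\rho(F_n(f)\cap F_{n'}(f))+u$ lies in $\rho F_n(f)+u\subset F_n(Z)\subset \Gamma_+(Z)$, and because $\ell_{n'}$ is constant on this edge, equal to $\rho\ell_{n'}(F_{n'}(f))+\ell_{n'}(u)$, the required inequality at $n'$ follows. Condition (ii) in the definition of $C$ is exactly tailored to driving the induction: whenever $\rho F_{\tilde n}(f)+u\subset F_{\tilde n}(Z)$ has been established at $\tilde n$ and the above inequality at a neighbor $n'$ is in fact an equality, the shared edge $\rho(F_{\tilde n}(f)\cap F_{n'}(f))+u$ lies in $F_{\tilde n}(Z)\cap F_{n'}(Z)$, so condition (ii) promotes the weak inequality to $\rho F_{n'}(f)+u\subset F_{n'}(Z)$, and the iteration continues from $n'$.

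The hard part will be the \emph{strict inequality} branch of the induction, when condition (ii) is not directly applicable and the face $\rho F_{n'}(f)+u$ sits strictly above $F_{n'}(Z)$ in the $\ell_{n'}$-direction. I expect to handle this through a perturbation argument enabled by \cref{lem:cone}: because $C$ is top-dimensional, one can approximate $Z$ by cycles $Z_\varepsilon\in C$ chosen so that each intermediate step along the relevant paths is an equality, apply the equality branch of the induction to $Z_\varepsilon$, and then pass to the limit $\varepsilon\to 0$ using the closedness of the containment relation in the parameter $Z$. A more combinatorial alternative would exploit condition (i) of $C$---which asserts that each $F_{n'}(Z)$ is determined by the local data at $n'$ and its neighbors---to argue that $\rho F_{n'}(f)+u$ remains inside $\Gamma_+(Z)$ even when it misses $F_{n'}(Z)$, by checking only the neighbor inequalities that were already controlled at the previous step of the iteration.
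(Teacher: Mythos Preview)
Your reduction to the scalar inequalities is correct, and the equality branch of your induction is exactly the mechanism the paper exploits: the second condition in \cref{def:cone} promotes an edge containment $\rho(F_{\tilde n}(f)\cap F_{n'}(f))+u\subset F_{\tilde n}(Z)\cap F_{n'}(Z)$ to the face containment $\rho F_{n'}(f)+u\subset F_{n'}(Z)$. The gap is entirely in the strict-inequality branch, and neither of your two proposals closes it.

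The perturbation route cannot work: with $\rho$ and $u$ fixed, the equality $\rho\,\wth_{n'}(f)+\ell_{n'}(u)=m_{n'}(Z_\varepsilon)$ pins down $m_{n'}(Z_\varepsilon)$ uniquely, so if this value differs from $m_{n'}(Z)$ then no $Z_\varepsilon$ close to $Z$ can satisfy it. Full-dimensionality of $C$ only guarantees that a \emph{generic} nearby $Z_\varepsilon$ has all inequalities along the path strict, which is precisely the case you are trying to avoid. The combinatorial alternative fails for the same structural reason: to place $\rho F_{n'}(f)+u$ inside $\Gamma_+(Z)$ via the first condition of \cref{def:cone} you would need bounds on $\ell_{n''}$ for every neighbour $n''$ of $n'$, but the neighbours on the far side of $n'$ are exactly the vertices your induction has not yet reached.

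The paper's device is different in kind: when the inequality at the next vertex $i$ is strict, it does \emph{not} keep $Z$ fixed. Instead it multiplies every coordinate $m_k(Z')$ with $k$ in the connected component $B$ of $\bar G\setminus A$ containing $i$ by the scalar $s\geq 1$ that forces equality at $i$, producing a cycle $Z''\geq Z'$. The induction then continues with $Z''$; at the end one has some $Z_{\mathrm{final}}\geq Z$ with $\rho\Gamma_+(f)+u\subset\Gamma_+(Z_{\mathrm{final}})\subset\Gamma_+(Z)$. The delicate point is that $Z''$ need not lie in $C$, so one cannot simply invoke \cref{def:cone} again; the paper instead carries a weakened inductive hypothesis $P_A(Z')$ whose second clause only demands the scaling property on the components of $\bar G\setminus A$, and the verification that this clause survives the passage $Z'\mapsto Z''$ rests on an auxiliary planar homothety statement (\cref{lem:homotheties}). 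That rescaling trick, together with the homothety lemma, is the missing idea in your sketch.
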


\begin{proof}
For $A\subset\Nd$ a subset inducing a connected subgraph of $\bar G$ containing
$n$, let $P_{A}(Z)$ be the following condition:
\begin{enumerate}
\item \label{cond:contains}
We have $\rho F_k(f) + u \subset F^{A}_k(Z)$ for all $k\in A$.
\item \label{cond:behaves}
For any $l\in\Nd \setminus A$,
$l'\in\Nd_l$ and dilation $\phi:\R^3\to\R^3$, $x\to \rho'x + u'$
so that
$\phi(F_l(f)\cap F_{l'}(f)) \subset F^B_l(Z)\cap F^B_{l'}(Z)$
where $B$ is the connected component of $\bar G\setminus A$ containing $l$,
we have $\phi(F_l(f))\subset F^B_l(Z)$.
\end{enumerate}

The assumptions of the lemma imply $P_{\{n\}}(Z)$.
Assuming there is a $Z'\in \bar L$ with $Z'\geq Z$
so that $P_\Nd(Z')$ holds, we find
$\rho\Gamma_+(f)+u\subset \Gamma_+(Z') \subset \Gamma_+(Z)$, proving
the lemma. Thus, it is enough to prove that given an $n\in A\subset\Nd$
inducing a connected subgraph of $\bar G$, and
a $Z'\geq Z$ so that $P_{A}(Z')$ holds,
and an $i\in\Nd_A\setminus A$,
there is a $Z''\geq Z'$ so that $P_{A\cup \{i\}}(Z'')$ holds.

So, let such an $i$ be given, assume that it is adjacent
in $\bar G$ to a $j\in A$. Since $\rho F_j(f) + u \subset F^A_j(Z)$ we have
$m_i(Z) \leq \rho \wth_i(f) + \ell_i(u)$. Let
$s = \frac{\rho \wth_i(f) + \ell_i(u)}{m_i(Z)}$.
Note that the denominator here is nonzero by \cref{rem:nonzero}.
Then $s\geq 1$.
Let $B$ be the connected component of $\bar G \setminus A$ containing
$i$ and define the cycle $Z''$ by
\[
  m_k(Z'') =
  \begin{cases}
    s m_k(Z) & \textrm{if } k \in B, \\
      m_k(Z) & \textrm{else}. \\
  \end{cases}
\]
Then $Z''\geq Z'$. We start by noting that condition
$P_{A\cup \{i\}}(Z'')$\ref{cond:behaves} follows immediately from
$P_{A}(Z')$\ref{cond:behaves}.

We are left with proving
$P_{A\cup \{i\}}(Z'')$\ref{cond:contains}.
We must show that for $k\in A\cup \{i\}$ and $l\in \Nd_{A\cup \{i\}}$ we have
\begin{equation} \label{eq:pf_i}
  m_l(Z'') \leq \min_{\rho F_k(f)+u} \ell_l,
\end{equation}
with equiality in the case $k=l$.

If $k\in A$ and $l\neq i$,
then this is clear from $P_A(Z')$\ref{cond:contains}.

The minimum of $\ell_i$ on $\cup_{k\in A} \rho F_k + u$ is taken on
$(\rho F_i + u) \cap (\rho F_j + u)$, and so by definition of
$m_i(Z'')$, \cref{eq:pf_i} holds also for $l=i$ and any $k\in A$.

\Cref{eq:pf_i} is also clear when $k = i$ and $l$ is either $i$ or $j$.

Finally, we prove \cref{eq:pf_i} in
the case $k \in A\cup \{i\}$ and $l \neq j$.
Similarly as above, the function $\ell_l$ restricted to
$\cup_{k\in A\cup\{i\}} \rho F_k + u$ takes its minimal value on
$(\rho F_i + u) \cap (\rho F_l + u)$, and so it suffices to consider
the case $k=i$. 

Let $\rho'>0$ and $u'\in \R^3$ be such that
$\rho'(F_i(f) \cap F_j(f)) + u' = F_i(Z') \cap F_j(Z')$.
By $P_A(Z')$\ref{cond:behaves}, we have $\rho' F_i(f) + u' \subset F_i(Z')$.
By the definition of $Z''$, we find $s\cdot F_i(Z') \subset F_i(Z'')$.
As a result,
\[
  s(\rho' F_i(f) + u') \subset s F_i(Z') \subset F_i(Z'').
\]
An application of \cref{lem:homotheties} now shows that if
$\rho'' > 0$ and $u''$ are such that
$\rho''(F_i(f) \cap F_j(f)) + u'' = F_i(Z'') \cap F_j(Z'')$, then
$\rho''F_i(f) + u'' \subset F_i(Z'')$.
Now, we get
\[
  \rho(F_i(f) \cap F_j(f)) + u \subset \rho''(F_i(f) \cap F_j(f)) + u''.
\]
which then implies
\[
  \rho F_i(f) + u \subset \rho'' F_i(f) + u'' \subset F_i(Z''),
\]
which is $P_{A\cup\{i\}}(Z'')$\ref{cond:contains} for $k=i$.
\end{proof}

\begin{lemma} \label{lem:homotheties}
Let $A \cong \R^2$ be an affine plane,
$\ell_i:A\to \R$ affine functions for $i=0,\ldots, s$
Assume that $P,Q\subset A$ are polygons given by inequalities
$\ell_i \geq p_i$ and $\ell_i\geq q_i$ respectively, in such a way that
$p_i = \min_P \ell_i$ and $q_i = \min_Q \ell_i$.
Let $P_i$ and $Q_i$ be the minimal sets of $\ell_i$ on $P$ and $Q$
respectively.
We assume that $Q \subset P$ and that $Q_0 = P_0$ is a segment
of positive length.

Take a $p'_0 < p_0$ in such a way that we have a polygon $P'$ defined
by inequalities $\ell_0 \leq p_0'$ and $\ell_i \leq p_i$ for $i>0$, and
$p'_0 = \min_{P'}\ell_0$, and define $P_i'$ as the minimal set of $\ell_i$
on $P'$. Assume that $P_0'$ is a segment of positive length.
Let $\phi:A\to A$ be the unique affine isomorphism which preserves directions
(i.e. if $L\subset A$ is a line, then $L$ and $\phi(L)$ are parallel)
so that $\phi(P_0) = P'_0$. Then $\phi(Q) \subset P'$.
\end{lemma}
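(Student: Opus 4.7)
The plan is to prove the stronger statement $\phi(P) \subset P'$, which immediately implies $\phi(Q) \subset \phi(P) \subset P'$ since $Q \subset P$ and $\phi$ is affine. First I identify $\phi$ concretely: preservation of directions forces its linear part to be $\lambda \cdot \mathrm{id}$ for some $\lambda > 0$, with $\lambda = |P_0'|/|P_0|$. Writing $v_a, v_b$ for the endpoints of $P_0$, they lie on the two lines $\{\ell_a = p_a\}$ and $\{\ell_b = p_b\}$ supporting the edges of $P$ adjacent to $P_0$. Since $P'$ shares these constraints, the endpoints $v_a', v_b'$ of $P_0'$ lie on the same two lines, and $\phi$ must map $v_a \mapsto v_a'$, $v_b \mapsto v_b'$. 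Consequently $\phi$ preserves both lines: if they are parallel, $\lambda = 1$ and $\phi$ is the translation by $\tau = v_a' - v_a$; otherwise the lines meet at a unique point $c$, the fixed point of the homothety $\phi$ of ratio $\lambda$.

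For each defining inequality $\ell_i \geq p_i$ of $P'$, I check $\ell_i(\phi(x)) \geq p_i$ for all $x \in P$. The identity $\ell_i(\phi(x)) = (1-\lambda)\ell_i(c) + \lambda \ell_i(x)$ (replaced by $\ell_i(x) + \nabla \ell_i \cdot \tau$ in the translation case) immediately handles $i = 0$, yielding $\ell_0(\phi(x)) = p_0' + \lambda(\ell_0(x) - p_0) \geq p_0'$, and handles $i \in \{a, b\}$ using $\ell_a(c) = p_a$, $\ell_b(c) = p_b$ together with $\ell_a(x), \ell_b(x) \geq p_a, p_b$. For indices $i$ whose edge $P_i$ is not adjacent to $P_0$, the required inequality reduces to $(1-\lambda)(\ell_i(c) - p_i) \geq 0$, respectively $\nabla \ell_i \cdot \tau \geq 0$ in the translation case.

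The main obstacle is establishing this sign for non-adjacent edges. The key geometric input is that $c$ lies strictly outside $P$ (two non-adjacent supporting edges of a convex polygon cannot have their lines meet inside it), and that $\ell_a, \ell_b$ are consequently the two tangent lines from the external point $c$ to the convex set $P$. The relation $p_0' = (1-\lambda)\ell_0(c) + \lambda p_0 < p_0$ shows that the sign of $\lambda - 1$ is opposite to that of $\ell_0(c) - p_0$. When $\ell_0(c) > p_0$, so $\lambda > 1$ and $c$ sits on the far side of $P_0$ from the bulk of $P$, convexity places every non-adjacent edge in the visible arc of $\partial P$ from $c$, giving $\ell_i(c) \leq p_i$; both factors of the product are then non-positive. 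When $\ell_0(c) < p_0$, so $\lambda < 1$, the non-adjacent edges lie in the hidden arc, giving $\ell_i(c) \geq p_i$; both factors are non-negative. In the translation case $\lambda = 1$, the vector $\tau$ runs along $\ell_a$ in the direction of decreasing $\ell_0$; since every non-adjacent edge lies on the side of $P$ opposite $P_0$, its inward normal has non-negative pairing with $\tau$, so $\nabla \ell_i \cdot \tau \geq 0$, finishing the proof.
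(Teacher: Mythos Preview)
Your proof is correct and follows essentially the same route as the paper: the same three-case split (two homothety cases plus the parallel/translation case), and in each case you establish the stronger inclusion $\phi(P)\subset P'$, which the paper also obtains (explicitly in Cases 2 and 3 via $P'=\convx(P,\phi(P))$, and implicitly in Case 1 via $\phi(P)\subset\convx(P,a)$ together with the $\ell_0$-bound). Your presentation is more algebraic, reducing each constraint to the inequality $(1-\lambda)(\ell_i(c)-p_i)\geq 0$, whereas the paper phrases the same content through convex hulls; these are equivalent.

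Two small remarks. First, there is a slip where you write that the sign of $\lambda-1$ is \emph{opposite} to that of $\ell_0(c)-p_0$; in fact $(1-\lambda)(\ell_0(c)-p_0)<0$ shows they have the \emph{same} sign, and indeed you use the correct relation in the very next sentence. Second, your ``visible/hidden arc'' justification for $\ell_i(c)\leq p_i$ (when $\lambda>1$) and $\ell_i(c)\geq p_i$ (when $\lambda<1$) is a bit terse; the cleanest way to make it rigorous is exactly the paper's observation: when $\lambda<1$ one has $c\in\{\ell_j\geq p_j:j>0\}=\convx(P,c)$, and when $\lambda>1$ the polygon $P$ sits inside the triangle with vertices $c,v_a,v_b$, so any supporting line $\{\ell_i=p_i\}$ with $i\neq 0,a,b$ must separate $c$ from the base $P_0$.
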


\begin{proof}
We can assume that $P'_1$ and $P'_s$ are adjacent to $P'_0$. Consider
three cases.

The first case is when the lines spanned by
the segments $P'_1$ and $P'_s$ are not parallel,
and their intersection point $a$ satisfies
$\ell_0(a) < p_0$. In this case, $\phi$ is a homothety with center
$a$ and ratio $< 1$. As a result, if we define
$P^1$ as the convex hull of $P$ and $a$, then $\phi(P) \subset P^1$.
In particular, $\phi(Q) \subset P_1$.
The polygon $P^1$ can be defined by the inequalities
$\ell_i \geq c_i$ for $i>0$. It is clear that $\phi(Q)$ also satisfies
$\ell_0 \geq c_0'$. As a result, $\phi(Q) \subset P'$.

\begin{figure}[ht]
\begin{center}
\input{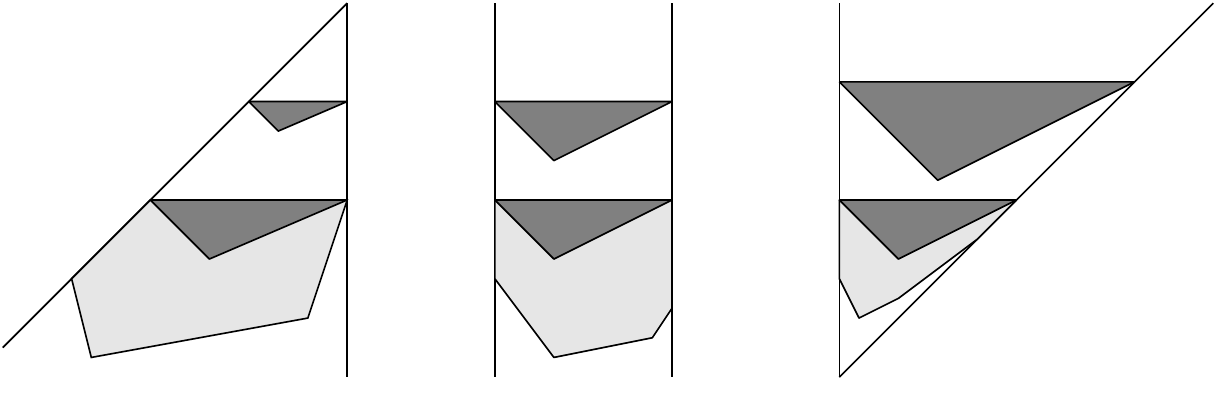_t}
\caption{Two homotheties and a translation.}
\label{fig:homotheties}
\end{center}
\end{figure}

In the second case, assume that the segments $P'_1$ and $P'_s$ are
parallel. In this case, $\phi$ is a translation preserving the lines
spanned by $P'_1$ and $P'_s$, and $P'$ is the convex hull of
$P$ and $\phi(P)$. In particular, $Q \subset P'$.

In the third case, the lines spanned by $P'_1$ and $P'_s$ are not
parallel, and their intersection point $a$ satisfies
$\ell_0(a) > c_0$. In this case, $\phi$ is a homothety with center
$a$ and ratio $>1$ and similarly as in the second case,
$P'$ is the convex hull of $P$ and $\phi(P)$, and so $\phi(Q) \subset P'$.
\end{proof}

\section{Equality between ideals} \label{s:eq}
For $g\in\O_{\C^3,0}$ denote by $g_n$ the principal part of $g$ with respect
to the weight function $\ell_n$.
For $i=1,2,3$ and $\hat g\in \O_{\C^3,0}$,
we denote by $\wth_i$ the weight of $g$ with respect to the $i$-th
natural basis vector, i.e. $\wth_i(x_j) = \delta_{i,j}$.

\begin{lemma} \label{lem:EGZ}
Let $g\in\O_{\C^3,0}$. Then $\wth_n g \leq \div_n g|_X$ with sharp inequality
if and only if $f_n$ divides $g_n$ over the ring of Laurent polynomials.
\end{lemma}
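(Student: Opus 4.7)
The plan is to compute both quantities in a local toric chart on $Y_f$ adapted to the ray $\ell_n$. Since $\sigma_n = \R_{\geq 0}\ell_n$ is one-dimensional, the corresponding affine toric chart $U_{\sigma_n}\subset Y_f$ is smooth, isomorphic to $\C\times(\C^*)^2$; I pick coordinates $(u,v,w)$ on $U_{\sigma_n}$ so that $\{u=0\}$ is a local equation of the orbit closure $\overline{O_n}$ and $(v,w)$ parametrise $O_n$. For every monomial $x^\alpha\in\O_{\C^3,0}$ the pullback takes the form $\pi^*x^\alpha = u^{\ell_n(\alpha)}\,m_\alpha(v,w)$ for some Laurent monomial $m_\alpha$, and factoring out the smallest power of $u$ gives
\[
  \pi^*g = u^{\wth_n g}\,G(u,v,w),\qquad
  \pi^*f = u^{\wth_n f}\,F(u,v,w),
\]
where by construction $G|_{u=0}$, respectively $F|_{u=0}$, represents the Laurent polynomial $g_n$, respectively $f_n$, on the orbit (up to multiplication by a Laurent monomial coming from the choice of torus coordinates, which is irrelevant for the divisibility question).

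The strict transform $\bar X\cap U_{\sigma_n}$ is cut out by $F=0$, and Newton nondegeneracy makes the curve $\{f_n=0\}\subset(\C^*)^2$ smooth. Hence $\bar X$ is smooth at every generic point of $E_n = \bar X\cap\{u=0\}$, and the implicit function theorem shows that $u$ restricts to a local uniformiser for the divisor $E_n$ on $\bar X$ there. Restricting the first identity to $\bar X$ and reading off $u$-adic valuations gives
\[
  \div_n(g|_X) = \wth_n g + \mathrm{ord}_u\!\bigl(G|_{\bar X}\bigr),
\]
which proves the stated inequality. Strict inequality holds precisely when $G|_{\bar X}$ vanishes along $E_n$, i.e.\ when the Laurent polynomial $g_n$ vanishes on the curve $\{f_n=0\}$ inside the orbit.

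To convert this vanishing condition into divisibility I invoke the Nullstellensatz in the UFD $\C[v^{\pm 1},w^{\pm 1}]$. Smoothness of the scheme $\{f_n=0\}$ forces $f_n$ to be square-free, so the ideal of the curve is exactly $(f_n)$; thus $g_n$ vanishes on $\{f_n=0\}$ if and only if $f_n\mid g_n$ as Laurent polynomials, which is unaffected by the Laurent-monomial unit from the coordinate choice. The argument is mostly a bookkeeping exercise; the main points requiring care are the identification of $G|_{u=0}$ with $g_n$ and the verification that $\bar X$ is smooth at generic points of $E_n$, both of which rest on Newton nondegeneracy together with the fact that the chart $U_{\sigma_n}$ for a one-dimensional cone is automatically smooth.
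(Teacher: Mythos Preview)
Your argument is correct and is precisely the standard toric computation the paper defers to by citing Ebeling--Gusein-Zade: pass to the smooth chart $U_{\sigma_n}\cong\C\times(\C^*)^2$, factor out the lowest power of the orbit coordinate, and use Newton nondegeneracy to identify the residual vanishing along $E_n$ with divisibility by the squarefree Laurent polynomial $f_n$. There is nothing to add; the paper itself gives no independent proof beyond this reference.
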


\begin{proof}
See e.g. the proof of Proposition 1 of \cite{Ebeling_Gusein-Zade}.
\end{proof}

\begin{lemma} \label{lem:axis}
Let $g \in \O_{\C^3,0}$ and assume $\wth_n g < \div_n g$ for some $n\in\Nd$.
Let $h = g_n / f_n$ (a Laurent polynomial by \ref{lem:EGZ}).
Writing $\{1,2,3\} = \{i,j,k\}$, if 
$F_n(f)$ intersects the $x_jx_k$ coordinate plane, then $\wth_i(h) \geq 0$.
\end{lemma}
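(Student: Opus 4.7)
The plan is essentially a weight-additivity argument in the Laurent polynomial ring $\C[x_i^{\pm},x_j^{\pm},x_k^{\pm}]$. The key observation is that $\wth_i$ restricted to nonzero Laurent polynomials is additive under multiplication, because coefficients of extremal $x_i$-powers live in $\C[x_j^{\pm},x_k^{\pm}]$, which is an integral domain.

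I first reduce the hypothesis on the Newton face. Since $F_n(f)$ meets the $x_j x_k$-coordinate plane, there is a lattice point $u \in F_n(f) \cap \supp(f)$ with $u_i = 0$; the corresponding monomial $x^u$ belongs to $f_n$, so $\wth_i(f_n) \leq 0$. But $f \in \O_{\C^3,0}$ is a power series, so every monomial of $f_n$ has nonnegative exponents, whence $\wth_i(f_n) \geq 0$. Therefore $\wth_i(f_n) = 0$. Similarly, $g \in \O_{\C^3,0}$ gives $\wth_i(g_n) \geq 0$.

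Now I use $g_n = f_n \cdot h$ in $\C[x_1^{\pm},x_2^{\pm},x_3^{\pm}]$. Writing $f_n = x_i^{0}\cdot F + (\text{higher powers of }x_i)$ and $h = x_i^{\wth_i h}\cdot H + (\text{higher powers of }x_i)$ with $F, H \in \C[x_j^{\pm}, x_k^{\pm}] \setminus \{0\}$, the lowest $x_i$-power appearing in the product $f_n h$ has coefficient $F\cdot H$, which is nonzero because $\C[x_j^{\pm}, x_k^{\pm}]$ is an integral domain. Hence $\wth_i(g_n) = \wth_i(f_n) + \wth_i(h) = \wth_i(h)$. Combined with $\wth_i(g_n) \geq 0$ this gives the desired inequality.

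The only subtle point is the additivity step, i.e. ensuring no cancellation occurs in the leading $x_i$-coefficient of the product; this is entirely handled by working inside a domain and factoring out $x_i$ to isolate coefficients in $\C[x_j^{\pm},x_k^{\pm}]$. Everything else is bookkeeping, so I do not anticipate a serious obstacle.
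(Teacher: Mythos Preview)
Your argument is correct and is essentially the same as the paper's: both use that $f_n$ has a monomial with $x_i$-exponent $0$ to conclude that a negative $x_i$-exponent in $h$ would survive in $g_n = h f_n$. Your version is more careful, spelling out the no-cancellation step via the integral domain $\C[x_j^{\pm},x_k^{\pm}]$, which the paper leaves implicit.
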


\begin{proof}
Assume that $h$ contains a monomial with a negative power of $x_i$.
Then the same would hold for $g_n = h f_n$, since $f_n$ contains monomials with
no power of $x_i$.
\end{proof}

\begin{proof}[Proof of \cref{thm:intro}]
We want to show that for any $Z \in C$, we have $\F(Z) = \G(Z) = \I(Z)$.
In light of \cref{eq:inclusions}, it suffices to show that $\F(Z)$
contains $\I(Z)$, that is, if $g \in \F(Z)$, then there exists a
$\hat g\in \O_{\C^3,0}$ restricting to $g$
with $\wth \hat g \geq m_n(Z)$ for all $n\in \Nd$.

We use the classification in \cite{Newton_nondeg} to set up an induction
on the vertices of $\bar G$.
Assume that $n_0$ is a vertex which intersects all the coordinate axis.
This can be done by Proposition 2.3.9 of \cite{Newton_nondeg}
by choosing $F_{n_0}$ either as a central facet
or containing a central edge.
We define the partial ordering $\leq$ on $\Nd$ by setting
$n_1 \leq n_2$ if $n_1$ lies on the geodesic connecting $n_0$ and $n_2$.
Note that $\bar G$ has well defined geodesics since it is a tree.

We prove inductively the statement $P(A)$ that for a subset
$A\subset \Nd$ satisfying
\[
  n\in A,\quad n'\leq n,\quad\Rightarrow\quad n'\in A,
\]
there exists
a $\hat g\in \O_{\C^3,0}$ satisfying $\hat g|_X = g$ and
$\wth_n g \geq m_n(Z)$ for any $n\in A$.

The initial case $P(\emptyset)$ is clear, but we prove
$P(\{n_0\})$ as well. Take any $\hat g \in \O_{\C^3,0}$ restricting
to $g$. If $\wt_{n_0} \hat g < m_{n_0}(Z)$, then by \cref{lem:EGZ}
there is a Laurent polynomial $h$ so that
$\wth_{n_0} (\hat g - hf) > \wth_{n_0} \hat g$. By our choice of $n_0$
and \cref{lem:axis}, $h$ is a polynomial, and so we can replace
$\hat g$ with $\hat g - hf \in \O_{\C^3,0}$. After repeating this argument
finitely many times, we can assume that $\wth_{n_0} \hat g \geq m_{n_0}(Z)$.

Next, assume that $A \subset \Nd$ satisfies our inductive hypothesis,
and that $n\in \Nd$ is a minimal element of $\Nd\setminus A$.
It suffices to find a
polynomial $h$ such that $\hat g - hf$ $P(A)$, as well as
$\wth_n (\hat g-hf) > \wth_n(\hat g)$.

By \ref{lem:EGZ} there does exist a Laurent polynomial $h$ so that
$\wth_n(\hat g-hf) > \wth_n \hat g$. Indeed, set $h = \hat g_n / f_n$.
We can assume that $F_n(f)$ intersects the $x_1x_3$ and $x_2x_3$ coordinate
hyperplanes. 
By \ref{lem:axis} we have $\wth_1 h \geq 0$ and $\wth_2 h \geq 0$.
In order to finish the proof, it therefore suffices to show
$\wth_3(h) \geq 0$ and $\wt_a(hf) \geq m_n(Z)$.

We construct a cycle $Z'$ as follows.
Let $a$ be the unique vertex in $A$ adjacent to $n$ and
$p$ the unique point on the $x_3$ axis satisfying $\ell_a(p) = m_a(Z)$.
Set $m_k(Z') = m_k(Z)$ for all
$k$ in the connected component of $\bar G\setminus n$ containing $A$,
otherwise set $m_k(Z') = \ell_k(p)$. As a result, the Newton polyhedron
$\Gamma_+(Z')$ of $Z'$ is the convex closure of $\Gamma(Z)$ and the point
$p$. In particular, if $k$ is in the connected component of $G\setminus n$
containing $A$, then either $F_k(Z') = F_k(Z)$, or $k=a$ and
$F_a(Z') \subset F_a(Z)$. For any other vertex $k$, we have
$F_k(Z') = \{p\}$. It follows from this that $Z' \in C$.

In fact, we find that
\[
  x\in \R^3_{\geq 0},\quad
  \ell_a(x) = m_a(Z),\quad
  \ell_n(x) \leq m_n(Z)\quad
  \Rightarrow\quad
  x\in F_a(Z').
\]
Now let $u\in \supp(h)$ and $w \in \supp(f_n)$. We then have
$\ell_a(u+w) \geq m_a(Z)$ and $\ell_n(u+w) < m_n(Z)$. Since
$\ell_a(0,0,1) > 0$, there is a $t>0$ so that
$\ell_a(u+w-(0,0,t)) = m_n(Z)$, and we also have
$\ell_n(u+w-(0,0,t)) < m_n(Z)$. We have thus proved that
$F_n(f) + u - (0,0,t) \subset F_n(Z')$. \Cref{lem:containment} now gives
the middle containment in
\[
  \Gamma_+(f) + u
  \subset \Gamma_+(f)+u-(0,0,t)
  \subset \Gamma_+(Z') \subset \R^3_{\geq 0},
\]
which implies, on one hand,
that $\wth_k(hf) \geq m_k(Z') = m_k(Z)$ for all $k\in A$,
and on the other hand, $\wth_3(h) = \wth_3(hf) \geq 0$, finishing the proof.
\end{proof}

\section{Suspension singularities} \label{s:susp}
In this section we consider suspension singularities. In this case,
a stronger statement than \cref{thm:intro} holds, namely, the three filtrations
all coincide. 
Most of the work in this section, however, goes into proving the
\emph{reduced identity} for nodes for suspension singularities,
see \cite{Nem_Poinc} Definition 6.1.5.
This means that the Poincar\'e series associated with
the filtration $\F$ (or $\G$ or $\I$, as they coincide in this case)
is identified by a topological invariant, the \emph{zeta function}
associated with the link of the singularity.

In this section we assume that $(X,0)$ is a suspension singularity, that
is, there is an $f_0\in \O_{\C^2,0}$ and an $N \in \Z_{>1}$
so that $(X,0)$ is given by an equation
$f = 0$, where $f(x,y,z) = f_0(x,y) + z^N$.
Newton nondegeneracy for $f$ means that $f_0$ is Newton nondegenerate.
For convenience, we will also assume that the diagram of $f$
is convenient. This is equivalent to $f_0$ not vanishing along the
$x$ or $y$ axis.

\begin{proof}[Proof of \cref{thm:susp}]
If $f$ is the $N$-th suspension of an equation of a plane curve given by
$f_0 = 0$, so that $f(x,y,z) = f_0(x,y) + z^N$, then every compact facet of
$\Gamma_+(f)$ is the convex hull of a compact facet of the Newton polyhedron
of $f_0$ and the point $(0,0,N)$. In particular, $\Gamma_+(f)$ is
\emph{bi-stellar}, and so by Proposition 4 of \cite{Lemah}, we have
$\I = \G$.

Now, let $n\in \Nd$ correspond to the facet $F_n \subset \Gamma_+(f)$.
By the description above, $F_n$ intersects all coordinate hyperplanes.
If $\hat g\in \O_{\C^3,0}$ and $\wth_n \hat g < \div_n g|_X$, then
by \cref{lem:EGZ,lem:axis}, there is a polynomial $h$ so that
$\wth_n \hat g - fh > \wth \hat g$. As a result, we find
$\wt_n g = \div_n g$ for $g = \hat g|_X$, that is, $\F = \G$.

The formula for the Poincar\'e series is shown in \cref{s:power} to
follow from \cite{Lemah}.
The formula for the zeta function is \cref{thm:zeta}.
\end{proof}

Using a smooth subdivision of the normal fan to $\Gamma_+(f)$, we obtain
an embedded resolution of $(X,0)$, whose resolution graph we denote by
$G$. This graph is obtained as follows.
From $\bar G^*$, construct $G^*$ by replacing edges between
$n,n'\in \Nd$ with a string, and an edge between $n\in \Nd$ and
$n'\in \Nd^*\setminus \Nd$ with $t_{n,n'}$ bamboos.
The graph $G$ is obtained from $G^*$ by removing the vertices in
$\Nd^*\setminus\Nd$, see \cite{Oka} for details.
We denote by $\V$ the vertex set of $G$, and we have a natural inclusion
$\Nd \subset \V$, where if $v\in \V$, then $v\in \Nd$ if and only if
$v$ has degree $>2$. We denote by $\E$ the set of vertices in $G$
with degree $1$. Note that if $e\in \E$, then there are unique
$n\in\Nd$ and $n'\in \Nd^*\setminus\Nd$ so that $e$ lies on a bamboo
connecting $n$ and $n'$. We set $\alpha_e = \alpha_{n,n'}$ in this case,
recall \cref{def:t_alpha}. For a given $n$, we denote the set of such
$e\in \E$ by $\E_n$. Thus, the family $(\E_n)_{n\in\Nd}$ is a partitioning
of $\E$.
A vertex $v\in \V$ corresponds to an irreducible
component of the exceptional divisor $E_v$.

The associated intersection lattice is negative definite, in particular,
the intersection matrix is invertible. Thus, for $v\in \V$,
we have a well defined
cycle $E_v^*$, that is, divisor supported on the exceptional divisor of
the resolution, satisfying $(E^{\phantom{*}}_w, E_v^*) = 0$ if $w\neq 0$, but
$(E^{\phantom{*}}_v, E_v^*) = -1$.
We denote the lattice generated by $E_v$ by $L$, and the lattice
generated by $E_v^*$ by $L'$. We then have
$L = H_2(\X, \Z)$ and $L' = H_2(\X,\partial\X,\Z) = \Hom(L,\Z)$.

Write $\Gamma_+(f_0) = \cup_{i=1}^r\Gamma_0^i$, where
$\Gamma_0^i = [(a_{i-1},b_{i-1}),(a_i,b_i)]$
are the facets of the Newton polyhedron of $f_0$,
so that $0=a_0<\ldots< a_r$ and $b_r = 0$.
Let $s_i$ be the length of the $i$-th segment,
that is, the content of the vector
$(a_i-a_{i-1}, b_i-b_{i-1})$.
Let $F_{n_i}$ be the facet of $\Gamma_+(f)$ containing the segment
$[(a_{i-1}, b_{i-1}), (a_i, b_i)]$.
Furthermore, let $s_x = \gcd(N,b_0)$ and $s_y = \gcd(N,a_r)$.
Then, in fact, if $n_x, n_y$ are the vertices in $\Nd^*$ corresponding
to the $yz$ and $xz$ coordinate hyperplanes, respectively, then
$s_x = t_{n_1, n_x}$ and $s_y = t_{n_r, n_y}$.

It can happen that the diagram $\Gamma(f)$ is not minimal in the
sense of \cite{Newton_nondeg}. This is the case
if $s_x = N$, $s_y = N$, $a_1 = 1$ or $b_{r-1} = 1$. If this is the case, we
blow up the appropriate points to produce redundant legs consisting of a
single $-1$ curve to make sure that nodes, that is, vertices of degree
$>2$ in $G$ correspond to facets
in $\Gamma(f)$ and their legs correspond to primitive segments on
the boundary of $\Gamma(f)$. In particular, we assume that
$\wt xyz = \sum_{e\in\E} E_e^*$.

The sets $\E_{n_1}$ and
$\E_{n_r}$ have special elements $e_j^x$, $1\leq j\leq s_x$, and $e_j^y$,
$1\leq j \leq s_y$, corresponding to the
segments $[(0,b_0,0),(0,0,N)]$ and $[(a_r,0,0),(0,0,N)]$, respectively. 
Set $\E_1^x = \{e^x_i|1\leq i\leq s_x\}$ and $\E_i^x = \emptyset$ for
$i>1$. Similarly, set $\E_r^y = \{e^y_i|1\leq i\leq s_y\}$ and
$\E_i^y = \emptyset$ for $i<r$. Further, let
$\E_i^z = \E_{n_i} \setminus (\E_i^x\cup \E_i^y)$.
Set also $\E^t = \cup_i \E_i^t$ for $t=x,y,z$.
Note that we get $|\E_i^z| = s_i$. Define $s_z = \sum_i s_i$. Write
$\E_i^z = \{e_1^{z,i}, \ldots, e_{s_i}^{z,i}\}$.
Note that the number $\alpha_e$ is constant for $e\in\E^x$ (in fact, we
have $\alpha_e = a_1/s_1$). We denote this
by $\alpha_x$. Define $\alpha_y$ similarly.

If $1<i<r$ we have $\alpha_e = N$ for $e\in\E_i^z$. We have
$\alpha_e = N/s_x$ for $e\in\E_1^z$ and
$\alpha_e = N/s_y$ for $e\in\E_r^z$.

\begin{lemma} \label{lem:ends_nodes}
Let $n\in\Nd$ and $e\in\E_n$. Then $\alpha_e E_e^* - E^*_n \in L$.
Furthermore, $\alpha_e E_e^* - E^*_n$ is supported on the leg containing
$e$, that is, the connected component of $G\setminus n$ containing $e$.
\end{lemma}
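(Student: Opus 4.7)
The plan is to exhibit $\alpha_e E_e^* - E_n^*$ explicitly as an integral cycle supported on the leg of $G\setminus n$ containing $e$, and identify it by comparing intersection numbers. Parametrize this leg as a chain $v_1, v_2, \ldots, v_k = e$ with $v_1$ adjacent to $n$, and write $b_i = -(E_{v_i}, E_{v_i})$. Since the bamboo arises from a smooth toric subdivision, consecutive exceptional curves meet transversely in a single point, so $(E_{v_i}, E_{v_{i+1}}) = 1$ and $(E_n, E_{v_1}) = 1$. Define integers $a_i$ recursively by $a_0 = 0$, $a_1 = 1$, and $a_{i+1} = b_i a_i - a_{i-1}$, and set $Z = \sum_{i=1}^k a_i E_{v_i}$, which is an integral cycle supported on the leg by construction.

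The first step is to compute intersection numbers. Since $Z$ is supported on the leg and the only vertex outside it adjacent to it is $n$, we get $(Z, E_w) = 0$ for $w \notin \{v_1,\ldots,v_k,n\}$. The recurrence translates directly into $(Z, E_{v_i}) = a_{i-1} - b_i a_i + a_{i+1} = 0$ for $1 \leq i < k$, and $(Z, E_n) = a_1 \cdot (E_n, E_{v_1}) = 1$. These agree with the intersection numbers of $\alpha_e E_e^* - E_n^*$ against every $E_w$ except possibly $E_e$, so by nondegeneracy of the intersection form on $\bar L$, it suffices to verify $(Z, E_e) = a_{k-1} - b_k a_k = -\alpha_e$, i.e.\ $a_{k+1} = \alpha_e$ when we extend the recurrence one more step.

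The heart of the matter is this last identity, where the toric data enters. Setting $v_0 = n$ and $v_{k+1} = n' \in \Nd^* \setminus \Nd$ (so the original edge in $\bar G^*$ is $(n,n')$), the primitive normal vectors satisfy the standard toric relation $\ell_{v_{i-1}} + \ell_{v_{i+1}} = b_i \ell_{v_i}$; since the subdivision is smooth, $\ell_n, \ell_{v_1}$ forms a $\Z$-basis of the saturation of $\langle \ell_n, \ell_{n'} \rangle$ in $\Hom(\bar L, \Z)$. Writing $\ell_{v_i} = p_i \ell_{v_1} - q_i \ell_n$ gives $p_0 = 0$, $p_1 = 1$ together with the same recurrence as the $a_i$, hence $a_i = p_i$ for all $i$. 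By \cref{def:t_alpha}, $\alpha_{n,n'}$ equals the absolute value of the determinant of $(\ell_n, \ell_{n'})$ in the basis $(\ell_n, \ell_{v_1})$, which is $p_{k+1} = a_{k+1}$; this yields $(Z, E_e) = -\alpha_e$, and therefore $Z = \alpha_e E_e^* - E_n^*$, proving simultaneously the integrality and the support assertion. The main obstacle is setting up this continued-fraction dictionary between the self-intersections $b_i$ and the arithmetic invariant $\alpha_e$; once this is in place, both conclusions of the lemma follow at once.
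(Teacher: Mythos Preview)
Your argument is correct. The paper's own proof is a one-line citation of Lemma~20.2 in Eisenbud--Neumann, so your route is genuinely different: instead of deferring to an external reference, you construct the candidate cycle $Z$ explicitly via the Hirzebruch--Jung recursion on the bamboo, match all intersection numbers, and then read off the crucial identity $a_{k+1}=\alpha_e$ from the standard toric relation $\ell_{v_{i-1}}+\ell_{v_{i+1}}=b_i\ell_{v_i}$ on the rays of the smooth subdivision of the cone $\langle \ell_n,\ell_{n'}\rangle$. What this buys is a self-contained proof that makes transparent why the lattice index $\alpha_{n,n'}$ is exactly the numerator of the continued fraction $[b_1,\ldots,b_k]$, and hence why it governs both the integrality and the support of $\alpha_e E_e^*-E_n^*$; the paper, by contrast, keeps this mechanism hidden inside the citation. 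One small notational slip: when you invoke nondegeneracy of the intersection form, it should be on $L$ (the full resolution lattice with basis $E_v$, $v\in\V$), not on $\bar L$; the dual classes $E_e^*,E_n^*$ live in $L'$, and you are comparing against all $E_w$ with $w\in\V$.
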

\begin{proof}
This follows from Lemma 20.2 of \cite{EisNeu}.
\end{proof}

\begin{definition}
Let $H$ be the first homology group of the link of $(X,0)$. Thus,
$H = L'/L$, where $L\subset L'$ via the intersection product.
If $l \in L'$, we denote its class in $H$ by $[l]$.
\end{definition}

\begin{lemma} \label{lem:H_order}
The order of $H$ is $N^{s_z-1} \alpha_x^{s_x-1} \alpha_y^{s_y-1}$.
\end{lemma}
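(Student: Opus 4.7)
The order of $H$ equals $|\det I|$, where $I$ is the intersection matrix on $L$: indeed, $L\hookrightarrow L'$ via the intersection pairing has matrix $I$ in the bases $\{E_v\}$ and $\{E_v^*\}$, so $[L':L]=|\det I|$. My strategy is to compute this determinant by reducing the tree $G$ from the leaves inward.

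The graph $G$ consists of a chain of $r$ nodes $n_1,\ldots,n_r$ (connected by internal bamboos) together with external legs (bamboos) at each $n_i$ terminating in the ends in $\E_{n_i}$. For each external leg from a node $n$ to an end $e\in\E_n$, iterated Laplace expansion from the leaf $e$ inward (equivalently, the standard Hirzebruch--Jung continued fraction reduction) extracts a factor $\alpha_e$ and leaves a rational correction to the self-intersection of $n$. Applied to every external leg simultaneously this gives
\[
   |\det I|=\Big(\prod_{e\in\E}\alpha_e\Big)\cdot|\det I^{\mathrm{red}}|,
\]
where $I^{\mathrm{red}}$ is an $r\times r$ rational matrix supported on the node chain whose entries encode the $e_{n_i}$, the rational leg corrections, and the internal bamboo data.

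Using the tabulation of $\alpha_e$ given in the paragraphs preceding the lemma ($\alpha_e=\alpha_x$ on $\E^x$, $\alpha_e=\alpha_y$ on $\E^y$, and $\alpha_e\in\{N,N/s_x,N/s_y\}$ on $\E^z$ according as $e$ lies in $\E_i^z$ for $1<i<r$, $i=1$, or $i=r$), a direct count yields
\[
   \prod_{e\in\E}\alpha_e
   =\alpha_x^{s_x}\alpha_y^{s_y}\cdot\frac{N^{s_z}}{s_x^{s_1}s_y^{s_r}}.
\]
It remains to show $|\det I^{\mathrm{red}}|=s_x^{s_1}s_y^{s_r}/(N\alpha_x\alpha_y)$, after which multiplication gives the desired $|H|=N^{s_z-1}\alpha_x^{s_x-1}\alpha_y^{s_y-1}$. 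I would establish the value of $|\det I^{\mathrm{red}}|$ by contracting each internal bamboo between consecutive $n_i,n_{i+1}$ in the same manner (its contracted determinant being controlled by the data in \cref{def:t_alpha}), and then evaluating the resulting tridiagonal determinant on $\Nd$ using the linear relation $e_n\ell_n+\sum_{n'}(t_{n,n'}/\alpha_{n,n'})\ell_{n'}=0$ from \cref{prop:intersection}, which ties together the self-intersection $e_{n_i}$ and the adjacent leg/bamboo data.

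\textbf{Main obstacle.} The primary difficulty is the rational bookkeeping: the leg and internal bamboo contractions produce fractions, and one must check that the resulting reduced determinant simplifies precisely to $s_x^{s_1}s_y^{s_r}/(N\alpha_x\alpha_y)$. A potentially cleaner, more conceptual route would be to describe $H$ as a direct sum of cyclic groups generated by the classes $[E_e^*]-[E_{e'}^*]$ for pairs of ends in each of $\E^x$, $\E^y$, and $\E^z$, using the relations $\alpha_e[E_e^*]=[E_n^*]$ from \cref{lem:ends_nodes} to pin down the orders; this would explain the shape of the answer directly, but requires showing that such a decomposition is genuine (i.e.\ that the three groups of differences intersect trivially and exhaust $H$).
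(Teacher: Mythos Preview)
Your approach is genuinely different from the paper's. The paper does not touch the intersection matrix at all: it invokes Milnor's identity $|H|=\Delta(1)$, where $\Delta$ is the characteristic polynomial of the monodromy on $H_2$ of the Milnor fiber, and then writes $\Delta(t)$ explicitly via Varchenko's formula for the monodromy zeta function in terms of the Newton diagram. Evaluating at $t=1$ and simplifying with the case distinctions built into the combinatorics (e.g.\ $s_1\neq 1\Rightarrow s_x=1$ and $\alpha_1=N$) yields the stated product.

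Your proposal is a plan rather than a proof. The leg-contraction factorisation and the value of $\prod_{e\in\E}\alpha_e$ are correct, but the decisive step---the evaluation of $|\det I^{\mathrm{red}}|$---is not carried out, and you flag this yourself. This is a genuine gap, not mere bookkeeping: the reduced determinant depends on the internal bamboo invariants $\alpha_{n_i,n_{i+1}}$ and on the rational corrections $\beta_e/\alpha_e$ that each contracted leg deposits on the diagonal at its node, none of which you have identified. The relation in \cref{prop:intersection} involves all neighbours in $\Nd^*$, including the noncompact ones, so it does not by itself collapse the tridiagonal determinant; one effectively needs the splice-diagram determinant formula (Eisenbud--Neumann) or an equivalent explicit computation. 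Your alternative idea---assembling $H$ from the classes $[E_e^*]$ and their differences---is close in spirit to what the paper does \emph{after} this lemma, in \cref{lem:generators}, but there the logic runs the other way: the order of $H$ established here is used as input to certify that a proposed list of generators of $V_\E$ is complete. Reversing that argument would require an independent proof that the listed relations exhaust $H$, which is precisely the missing ingredient.
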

\begin{proof}
From the proof of Theorem 8.5 of \cite{Milnor_hyp}, we see that in fact,
$|H| = \Delta(1)$, where $\Delta$ is the characteristic polynomial
of the monodromy actiong on the second homology of the Milnor fiber.
We leave to the reader to verify, using \cite{Var_zeta}, that the
characteristic polynomial is, in our case, given by the formula
\[
\begin{split}
  \Delta(t) =
  &\left[ 
    \left(\prod_{i=1}^r (t^{m_i} - 1)^{s_i}\right)
    \left(t^{m_1}-1\right)^{s_x-1}
    \left(t^{m_r}-1\right)^{s_y-1}
  \right]\\
  &\left[
    \left(\prod_{i=1}^r (t^{\frac{m_i}{\alpha_i}} - 1)^{s_i}\right)
    \left(t^{\frac{m_1}{\alpha_x}}-1\right)^{s_x}
    \left(t^{\frac{m_r}{\alpha_y}}-1\right)^{s_y}
  \right]^{-1}\\
  &\left[
    \left(t^{\frac{m_1}{\alpha_1\alpha_x}}-1\right)
    \left(t^{\frac{m_r}{\alpha_r\alpha_y}}-1\right)
    \left(t^N-1\right)
  \right]\\
  &(t-1)^{-1},
\end{split}
\]
where for $i=1,\ldots, r$, we take $m_i\in \Z$ so that the facet $F_{n_i}$ of
$\Gamma_+(f)$ containing $[(a_{i-1}, b_{i-1}), (a_i, b_i)]$ is contained
in the hyperplane $\ell_{n_i} \equiv m_i$.
This implies
\[
\begin{split}
  \Delta(1) =&
  \frac{
    \left[\prod_{i=1} m_i^{s_i}\right]
    m_1^{s_x-1}
    m_r^{s_y-1}
    \left( \frac{m_i}{\alpha_1 \alpha_x} \right)
    \left( \frac{m_r}{\alpha_r \alpha_y} \right)
    N
  }
  {
    \left[\prod_{i=1} \left(\frac{m_i}{\alpha_i}\right)^{s_i}\right]
    \left( \frac{m_i}{\alpha_1} \right)^{s_x}
    \left( \frac{m_r}{\alpha_r} \right)^{s_y}
  }\\
  =&\left[\prod_{i=1}^r \alpha_i^{s_i}\right]
    \alpha_1^{-1} \alpha_r^{-1}
    \alpha_x^{s_x-1} \alpha_y^{s_y-1} N
\end{split}
\]
Now, for $1<i<r$, we have $\alpha_i = N$. Furthermore, if
$s_1 \neq 1$, then $s_x = 1$ and $\alpha_1 = N$. Similarly, if
$s_r \neq 1$, then $s_y = 1$ and $\alpha_r = N$. As a result,
the above product equals $N^{s_z-1} \alpha_x^{s_x-1} \alpha_y^{s_y-1}$.
\end{proof}

\begin{lemma} \label{lem:En_triv}
For $1\leq i \leq r$, let $g_i$ be a generic sum of 
$x^\frac{a_{i+1}-a_i}{s_i}$ and $y^\frac{b_i-b_{i+1}}{s_i}$.
Then, for $1<i<r$ we have $\div g_i = E_{n_i}^*$.
In particular, $[E_{n_i}^*] = 0 \in H$.

Furthermore, we have 
$\div g_1 = s_x E_{n_1}^*$ and $\div g_r = s_y E_{n_r}$). In particular,
$s_x[E_{n_1}^*] = s_y[E_{n_r}^*] = 0 \in H$.
\end{lemma}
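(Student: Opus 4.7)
The plan is to compute $\div g_i$ on $\tilde X$ by parametrising the strict transform $T_i$ of $\{g_i=0\}\cap X$ near the origin and using the projection-formula identity $(\div g_i, E_v)=0$ to read off the exceptional part. Writing $p_i = (a_i-a_{i-1})/s_i$ and $q_i = (b_{i-1}-b_i)/s_i$, so $\gcd(p_i,q_i)=1$ and $g_i$ is (up to rescaling) a generic linear combination $\alpha x^{p_i}+\beta y^{q_i}$, I first note that $\{g_i=0\}\subset\C^2$ is irreducible with local Puiseux parametrisation $(x,y) = (u t^{q_i}, v t^{p_i})(1+O(t))$ where $\alpha u^{p_i}+\beta v^{q_i}=0$. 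Newton nondegeneracy of $f_0$ together with genericity of $(\alpha,\beta)$ then yields
\[
  f_0|_{\{g_i=0\}} = C_i t^{M_i}(1+O(t)),
\]
with $M_i = q_i a_{i-1}+p_i b_{i-1}$ the common value of $q_i a + p_i b$ on $\Gamma_0^i$ and $C_i = f_{0,\Gamma_0^i}(u,v)\neq 0$.

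Setting $d_i = \gcd(N, M_i)$, $N = d_i N'$, $M_i = d_i M'$, the equation $z^N = -f_0|_{\{g_i=0\}}$ factors over $\C$ into $d_i$ irreducible branches of $T_i$, each parametrised by $s \mapsto (u s^{q_i N'}, v s^{p_i N'}, \zeta_k s^{M'})$ for distinct constants $\zeta_k$. Because $\gcd(p_i, q_i)=1$ forces $\gcd(q_i N, p_i N, M_i) = d_i$, the leading exponent vector $(q_i N', p_i N', M')$ is exactly the primitive normal $\ell_{n_i}$ to $F_{n_i}$. Each branch of $T_i$ therefore approaches the toric divisor attached to $\ell_{n_i}$ transversely through a smooth point of its open orbit; for generic $(\alpha,\beta)$ distinct branches land at distinct smooth points of $E_{n_i}$ and $T_i$ avoids every other exceptional curve. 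This gives $T_i\cdot E_{n_i} = d_i$ and $T_i\cdot E_v = 0$ for $v\neq n_i$, and the projection formula then forces the exceptional part of $\div g_i$ to be $d_i E^*_{n_i}$ in $L'$.

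What remains, and what I expect to be the main technical obstacle, is the arithmetic identification of $d_i$. For $i=1$ the vanishing $a_0=0$ reduces $M_1$ to $p_1 b_0$, and primitivity of $\ell_{n_1}$ combined with $\gcd(p_1, q_1)=1$ yields $d_1 = \gcd(N, b_0) = s_x$; symmetrically $d_r = s_y$ via $b_r=0$. For the interior indices $1<i<r$, however, the equality $d_i=1$ is a genuinely global fact about $\Gamma(f)$: one has to exploit the rational-homology-sphere hypothesis together with the paper's resolution normalisation $\wt(xyz) = \sum_{e\in\E} E^*_e$ and the ensuing combinatorial constraints on $\ell_{n_i}$ in $\Z^3$ to rule out nontrivial common divisors of $N$ and $M_i$. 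Everything preceding is a transparent local Puiseux computation; the delicate heart of the lemma is this final arithmetic identification in the middle case.
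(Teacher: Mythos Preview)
Your route and the paper's are close in spirit: both compute $\div g_i$ by counting branches of the strict transform $T_i$ through $E_{n_i}$ and then invoking the projection formula. The difference is packaging. You parametrise $T_i$ directly by Puiseux series and arrive at $d_i = \gcd(N, M_i)$ branches, whereas the paper observes that $g_i$ cuts out a curvette to the node $\bar n_i$ in the embedded resolution $V\to\C^2$ of the plane curve $\{f_0=0\}$ and then lifts through the suspension map $\tilde X \to V$, so that the branch count becomes the degree of the induced cover $E_{n_i}\to E_{\bar n_i}$. The paper's framing makes the role of the rational homology sphere hypothesis more transparent: irreducibility and rationality of $E_{n_i}$ directly constrain that degree.

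The gap you flag in the interior case is real, but there is an earlier one you do not flag. Your claim that primitivity of $\ell_{n_1}$ together with $\gcd(p_1,q_1)=1$ yields $d_1 = \gcd(N,b_0)$ is not justified: primitivity of $\ell_{n_1}$ is tautological here (you have already divided out the content), and what your computation actually gives is $d_1 = \gcd(N, p_1 b_0) = s_x\cdot\gcd(N/s_x,\,p_1)$, which need not equal $s_x$ without further input. The missing equality $\gcd(N/s_x, p_1)=1$, and symmetrically at $i=r$, is itself a consequence of the RHS hypothesis, on exactly the same footing as $d_i=1$ for interior $i$. In every case the clean argument is the paper's: if $d_i$ exceeded the stated value, the preimage of $E_{\bar n_i}$ under the suspension cover would force $E_{n_i}$ to be reducible or of positive genus (equivalently, $F_{n_i}$ would carry interior lattice points), contradicting RHS. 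Your Puiseux computation is a perfectly good way to produce the identity $\div g_i = d_i E_{n_i}^*$; to finish, appeal to irreducibility and rationality of $E_{n_i}$ rather than hunt for a purely arithmetic identity in the lattice.
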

\begin{proof}
The curve $(C,0) \subset (\C^2,0)$ defined by $f_0$ splits into branches
$C = \cup_{i,j} C_{i,j}$ where $C_{i,1} \cup \ldots \cup C_{i,s_i}$
correspond to the segment $[(a_{i-1},b_{i-1}), (a_i, b_i)]$.
Let $G_0$ be the graph associated with the minimal resolution
$V\to \C^2$ of $f_0$.
There are vertices $\bar n_i$ in $G_0$ so that the strict transforms
$\tilde C_{i,j}$ intersect the component $E_{\bar n_i}$ transversely
in one point each. The curve defined by $g_i$ is a curvette to $n_i$,
that is, if we define $D_i = \{g_i=0\} \subset \C^2$, then
the strict transform $\tilde D_i$ in the resolution of $C$ is smooth
and intersects $E_{\bar n_i}$ in one point, and is disjoint from
the $\tilde C_i$.

The resolution of $(X,0)$ is obtained by suspending the pull-back
of $f_0$ to $V$, resolving some cyclic qutotient singularities, and then
blowing down some $(-1)$-curves, see e.g. Appendinx 1 in
\cite{Nemethi_FL}. In particular, we have a morphism $\tilde X \to V$,
mapping $E_{n_i}$ to $E_{\bar n_i}$. The condition that $(X,0)$ has
a rational homology sphere link implies that this map is branched
of order $N$ along this divisor. As a result, it restricts to
an isomorphism $E_{n_i} \to E_{\bar n_i}$, and the preimage
$D_i$ of $\tilde C_i$ intersects $E_{n_i}$ transversally in one point.
Clearly, $D_i$ is the strict transform of the vanishing set of $g_i$
seen as a function on $X$. It follows that $\div_v g_i = E^*_{n_i}$.

Simlarly, one verifies that we have maps $E_{n_1} \to E_{\bar n_i}$,
which are branched covering maps of order $s_x$. Thus, the
strict transform of the vanishing set of $g_1$ in $X$ consists of
$s_x$ branches, each intersecting $E_{n_1}$ in one point. Thus,
$\div_v g_1 = s_x E^*_{n_1}$. A similar argument holds for $g_r$.
\end{proof}

\begin{definition}
Let $V'_\E = \Z\langle E^*_e | e\in\E\rangle$ and $V_\E = V'_\E \cap L$.
\end{definition}

The group $H = L'/L$ is generated
by residue classes of ends $[E_e^*]$, $e\in\E$. This is proved in
Proposition 5.1 of \cite{NeumWa_SpQ}.
In particular, the
natural morphism $V'_\E/V_\E \to H$ is an isomorphism.

\begin{lemma} \label{lem:generators}
The lattice $V_\E$ is generated by the following elements
\[
  N E_e^*,\,e\in\E^z,\quad \alpha_x s_x E_e^*,\,e\in\E^x,\quad
  \alpha_y s_y E_e^*,\,e\in\E^x,
\]
\[
  \alpha_x(E_{e_i^x}^* - E_{e^x_{i+1}}^*), \, 1\leq i< s_i,\quad
  \alpha_y(E_{e_i^y}^* - E_{e^y_{i+1}}^*), \, 1\leq i< s_y,
\]
\[
  \div(t) = \sum_{e\in\E^t} E_e^*,\quad t=x,y,z.
\]
\end{lemma}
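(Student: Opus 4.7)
The plan is to prove both containments $W \subseteq V_\E$ and $V_\E \subseteq W$ separately, where $W$ denotes the sublattice of $V'_\E$ spanned by the listed elements. For $W \subseteq V_\E$, each generator is exhibited as an element of $L$. The divisors $\div(x), \div(y), \div(z)$ are principal. The difference $\alpha_x(E_{e_i^x}^* - E_{e_{i+1}^x}^*)$ is the difference of $\alpha_x E_{e_i^x}^* - E_{n_1}^*$ and $\alpha_x E_{e_{i+1}^x}^* - E_{n_1}^*$, both in $L$ by \cref{lem:ends_nodes}; the $\alpha_y$-differences are handled symmetrically. For $\alpha_x s_x E_e^*$ with $e \in \E^x$, we write $s_x(\alpha_x E_e^* - E_{n_1}^*) + s_x E_{n_1}^*$, where the first summand is in $L$ by \cref{lem:ends_nodes} and $s_x E_{n_1}^* = \div g_1 \in L$ by \cref{lem:En_triv}, and analogously for $\alpha_y s_y E_e^*$. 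For $N E_e^*$ with $e \in \E_i^z$ where $1 < i < r$, we use $\alpha_e = N$ and $E_{n_i}^* = \div g_i \in L$ to write $NE_e^* = (NE_e^* - E_{n_i}^*) + E_{n_i}^*$; for $e \in \E_1^z$ we instead have $\alpha_e = N/s_x$ and multiply the relation $\alpha_e E_e^* - E_{n_1}^* \in L$ by $s_x$ and add $\div g_1$, with the case $\E_r^z$ handled symmetrically.

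For the reverse containment, the natural map $V'_\E / V_\E \to H = L'/L$ is injective, and its image is all of $H$ because the classes $[E_e^*]$ for $e \in \E$ generate $H$ (Proposition~5.1 of \cite{NeumWa_SpQ}). Hence $[V'_\E : V_\E] = |H| = N^{s_z-1}\alpha_x^{s_x-1}\alpha_y^{s_y-1}$ by \cref{lem:H_order}. Combined with $W \subseteq V_\E$, it suffices to show $[V'_\E : W] = |H|$. Since each listed generator lies in one of the three orthogonal summands $V'_{\E,t}$ for $t \in \{x,y,z\}$, we have $W = W_x \oplus W_y \oplus W_z$ and the index factors as $\prod_t [V'_{\E,t} : W_t]$.

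Each factor is computed by a Smith normal form argument. For $W_z$, quotienting $V'_{\E,z} \cong \Z^{s_z}$ by the $N$-multiples of the basis vectors gives $(\Z/N)^{s_z}$, and $\div z$ then cuts out a cyclic subgroup of order $N$, yielding $[V'_{\E,z} : W_z] = N^{s_z-1}$. For $W_x$, pass to the basis $u = E_{e_1^x}^*$, $v_i = E_{e_{i+1}^x}^* - E_{e_i^x}^*$ of $V'_{\E,x}$; the generators become $\alpha_x s_x u$, $\alpha_x v_i$ ($1 \le i < s_x$), and $\div(x) = s_x u + \sum_i (s_x - i) v_i$, and a Smith normal form computation yields $[V'_{\E,x} : W_x] = \alpha_x^{s_x-1}$. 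The computation for $W_y$ is analogous, so $[V'_\E : W] = N^{s_z-1}\alpha_x^{s_x-1}\alpha_y^{s_y-1} = |H|$.

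The main obstacle is the Smith form computation for $W_x$: one must carefully track how $\div(x)$ interacts with the $\alpha_x v_i$ and $\alpha_x s_x u$ generators so that the factor of $s_x$ appearing in both $\div(x)$ and $\alpha_x s_x u$ cancels correctly, leaving $\alpha_x^{s_x-1}$ rather than a larger index; this is where the specific coefficients $(s_x - i)$ in $\div(x)$ become essential.
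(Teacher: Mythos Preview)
Your overall strategy matches the paper's: verify $W\subset V_\E$ generator by generator (your argument for this half is correct and essentially identical to the paper's use of \cref{lem:ends_nodes} and \cref{lem:En_triv}), then finish by an index comparison. The paper tracks $[V_\E:A]$ versus $[A'':A]$ through intermediate lattices $A\subset A'\subset A''=W$; you instead compute $[V'_\E:W]$ directly via the splitting $W=W_x\oplus W_y\oplus W_z$, which is a legitimate and slightly cleaner variant.

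The gap is the Smith normal form step for $W_x$. In your basis $u,v_1,\dots,v_{s_x-1}$ the generator $\alpha_xs_xu$ is redundant, since $\alpha_x\,\div(x)=\alpha_xs_xu+\sum_i(s_x-i)\,\alpha_xv_i$. Thus $W_x=\langle\alpha_xv_1,\dots,\alpha_xv_{s_x-1},\div(x)\rangle$, and the $s_x\times s_x$ matrix of these generators in the $(u,v_1,\dots,v_{s_x-1})$-basis has determinant $\pm s_x\alpha_x^{s_x-1}$: the factor $s_x$ does \emph{not} cancel. Hence $[V'_{\E,x}:W_x]=s_x\alpha_x^{s_x-1}$, and altogether $[V'_\E:W]=s_xs_y\,|H|$, giving $[V_\E:W]=s_xs_y$ rather than $1$. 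This is not merely a slip on your side: the paper's proof makes the mirror-image error, asserting $[A'':A']=(\alpha_xs_x)(\alpha_ys_y)N$ where in fact the image of $\div(x)=(1,\dots,1)$ in $V'_{\E,x}/A'_x$ has order only $\alpha_x$ (it is a basis vector of the lattice $\Lambda$ with $A'_x=\alpha_x\Lambda$), so the correct value is $\alpha_x\alpha_yN$. Thus the listed elements generate only an index-$s_xs_y$ sublattice of $V_\E$ whenever $s_xs_y>1$; the missing relations necessarily mix ends from different $\E^t$, since $V_\E$ does not decompose as $\bigoplus_t(V_\E\cap V'_{\E,t})$ even though $W$ does. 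One can see this already in the paper's own example, where $s_x=7$, $s_y=1$, and the displayed generators span a sublattice of index $7$ in $V_\E$.
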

\begin{proof}
We start by noting that
by \cref{lem:En_triv,lem:ends_nodes}, if $1<i<r$ and $e\in \E_{n_r}$, then
\[
  N E^*_e
  = \alpha_e E^*_e
  \equiv E_n^*
  \equiv 0 \quad (\mathrm{mod} \, L),
\]
i.e. $N E^*_e \in V_{\E}$. Similarly, if $e\in \E^z_1$, then
\[
  N E^*_e
  = s_x \alpha_e E^*_e
  \equiv s_x E_{n_1}^*
  \equiv 0 \quad (\mathrm{mod} \, L),
\]
and $N E^*_e \in V_{\E}$ for $e\in \E^z_r$ as well.
A similar argument shows $\alpha_x s_x E^*_e \in V_{\E}$ for
$e \in \E^x$ and $\alpha_y s_y E^*_e \in V_{\E}$ for $e\in \E^y$.
Let $A$ be the sublattice
of $V'_{\E}$ generated by these elements, that is, the top row
in the statement of the lemma. We then have $A \subset V_{\E}$,
and $[V'_{\E}:A] = (\alpha_x s_x)^{s_x} (\alpha_y s_y)^{s_y} N^{s_z}$.
By \cref{lem:H_order}, we get
\begin{equation} \label{eq:index}
  [V_{\E}:A]
  = [V'_{\E}:V_{\E}]^{-1} [V'_{\E}:A]
  = \alpha_x s_x^{s_x} \alpha_y s_y^{s_y} N.
\end{equation}

The elements in the second row are also elements of $V_\E$, since,
by \cref{lem:ends_nodes} we have
\[
  \alpha_x\left(E^*_{e_i^x} - E^*_{e_{i+1}^x}\right)
  =   \left(\alpha_xE^*_{e_i^x} - E^*_{n_1}\right)
    - \left(\alpha_xE^*_{e_{i+1}^x} - E^*_{n_1}\right)
  \in L,
\]
and similarly for $\alpha_y\left(E_{e_i^y}^* - E_{e^y_{i+1}}^*\right)$.
Let $A'$ be the subgroup of $V_{\E}$ generated by $A$ and these elements. Then
$[A':A] = s_x^{s_x-1} s_y^{s_y-1}$.

Finally, we have $\div(t) = \sum_{e\in \E^t} E^*_e \in L$
for $t=x,y,z$. Define $A''$ as the subgroup of $V_{\E}$
generated by $A'$ and $\div(t)$, $t=x,y,z$. Then
$[A'':A'] = (\alpha_x s_x) \cdot (\alpha_y s_y) \cdot N$,
and so $[A'':A] = \alpha_x s_x^{s_x} \alpha_y s_y^{s_y} N = [V_{\E}:A]$,
which gives $A'' = V_{\E}$.
\end{proof}

\begin{lemma} \label{lem:wt_f_z}
We have $\wth f|_\Nd = N \wth z|_\Nd$.
\end{lemma}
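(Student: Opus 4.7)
The plan is to exploit the very concrete form of the Newton polyhedron of a suspension. Since $f(x,y,z) = f_0(x,y) + z^N$, we have $\supp(f) = \supp(f_0) \cup \{(0,0,N)\}$, with $\supp(f_0)$ contained in the plane $\{z=0\}$. Therefore $\Gamma_+(f)$ is the upward closure of the convex hull of the staircase of $f_0$ in the $xy$-plane together with the single apex $(0,0,N)$. From this description, every compact facet $F_n$ of $\Gamma_+(f)$ is the convex hull of a compact edge of $\Gamma_+(f_0)$ (viewed as lying in $\{z=0\}$) with the apex $(0,0,N)$; in particular, $(0,0,N) \in F_n$ for every $n\in\Nd$.

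Once this geometric description is in hand, the lemma reduces to a one-line computation. Since $\ell_n$ attains its minimum value $m_n = \wth_n(f)$ on all of $F_n$, and $(0,0,N) \in F_n$, linearity of $\ell_n$ gives
\[
  \wth_n(f) = m_n = \ell_n(0,0,N) = N\,\ell_n(0,0,1) = N\,\wth_n(z),
\]
which is exactly the desired equality.

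The only real content is the claim that the apex $(0,0,N)$ lies on every compact facet. The potential worry is a compact facet lying entirely in $\{z=0\}$ and disjoint from the apex; however, the normal to any such facet would have to be proportional to $(0,0,1)$, and the corresponding face of $\Gamma_+(f)$ extends unboundedly in the $x$ and $y$ directions, hence is non-compact. This rules out the bad case, and since the remaining compact facets are triangles with apex $(0,0,N)$ and base a compact edge of $\Gamma_+(f_0)$ in the $xy$-plane, the lemma follows.
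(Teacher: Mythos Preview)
Your proof is correct and follows exactly the same idea as the paper's one-line argument: the key observation is that every compact facet of $\Gamma_+(f)$ contains the apex $(0,0,N)$, whence $\wth_n(f) = \ell_n(0,0,N) = N\,\wth_n(z)$. You have simply supplied the justification (vertices of a compact facet not containing the apex would all lie in $\{z=0\}$, forcing the facet into the noncompact face $\Gamma_+(f_0)\times\{0\}$) that the paper leaves implicit.
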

\begin{proof}
Indeed, every compact facet of $\Gamma_+(f)$ contains $(0,0,N)$.
\end{proof}

\begin{definition}[\cite{Nem_Poinc}] \label{def:zeta}
The \emph{zeta function}
associated with the graph $G$ is the expansion
at the origin of the rational function
$Z(t) = \prod_{v\in\V} \left( 1 - [E_v^*] t^{E_v^*} \right)^{\delta_v - 2}$.
Thus, if $G$ has more than one vertex, then we can write
\[
  Z(t) = \left[\prod_{n\in\Nd}
            \left( 1 - [E_v^*] t^{E_v^*} \right)^{\delta_v - 2}\right]
      \left[\prod_{e\in\E}
            \sum_{k=0}^\infty \left([E_e^*] t^{E^*_e}\right)^k \right]
   \in \Z[H][[t^{L'}]],
\]
whereas if $G$ has exactly one vertex, say $v$, then
\[
  Z(t) = (1-[E_v^*]t^{E^*_v})^{-2}
       = \sum_{k=0}^\infty (k+1) \left([E_v^*]t^{E^*_v}\right)^k.
\]
This latter case does not appear in our study of suspension singularities.
Here, $t$ denotes variables indexed by $\V$, and so if
$l = \sum_{v\in \V} l_v E_v \in L'$ with $l_v \in \Q$, then we write
$t^l = \prod_{v\in\V} t_v^{l_v}$.

We have $Z(t) \in \Z[H][[t^{L'}]] \cong \Z[[t^{L'}]][H]$, and the coefficient
in front of $t^l$ is in $[l]\cdot \Z \subset \Z[H]$. Therefore, we have
a decomposition $Z(t) = \sum_{h\in H} h\cdot Z_h(t)$ with
$Z_h(t) \in \Z[[t^{L'}]]$ for each $h\in H$.
In particular, $Z_0(t) \in \Z[[t^L]]$.

The \emph{reduced zeta function} $Z^\Nd(t)$ with respect to $\Nd$ is obtained
from $Z(t)$ by restricting $t_v = 1$ for $v\notin\Nd$.
By restricting $Z_0(t)$ similarly, we obtain
$Z_0^\Nd(t) \in \Z[[t^{\bar L}]]$.

In general, if $A(t) = \sum_{l\in L'} a_l t^l$ is a powerseries, then
we discard terms corresponding to $l \notin L$ by
setting $A_0(t) = \sum_{l\in L} a_l t^l$
\end{definition}

\begin{thm} \label{thm:zeta}
Assume that $G$ is the resolution of a Newton nondegenerate suspension
singularity, with rational homology sphere link. Then
\[
  Z_0^\Nd(t) = \frac{1-t^{\wth f}}{(1-t^{\wth x})(1-t^{\wth y})(1-t^{\wth z})},
\]
where, on the right hand side, we restrict to variables associated with
nodes only, i.e. we set $t_v = 1$ if $v\notin\Nd$.
\end{thm}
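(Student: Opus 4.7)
The plan is to compute $Z(t)$ as a rational function by expanding the product over the resolution graph $G$, extract the class-zero part $Z_0(t)$ via Fourier inversion on $H$, and finally restrict to variables indexed by $\Nd$. Only nodes and ends contribute non-trivially to the defining product, since intermediate vertices have $\delta = 2$. Thus
\[
  Z(t) = \prod_{i=1}^r (1-[E_{n_i}^*] t^{E_{n_i}^*})^{\delta_{n_i}-2} \prod_{e\in\E}(1-[E_e^*]t^{E_e^*})^{-1},
\]
where the exponents are $\delta_{n_i}-2 = s_i$ for interior nodes and $s_1 + s_x - 1$, $s_r + s_y - 1$ for the boundary nodes $n_1, n_r$. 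The first step is to verify these combinatorial data against the graph structure coming from the Newton polyhedron of a suspension.

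Next, I would apply $Z_0(t) = |H|^{-1}\sum_{\chi \in \hat H} Z^\chi(t)$, where $Z^\chi(t)$ is obtained by replacing each $[E_v^*]$ with $\chi([E_v^*]) \in \C$; each $Z^\chi$ is then a rational function with complex coefficients. By \cref{lem:ends_nodes}, restricting to $\Nd$ converts $t^{E_e^*|_\Nd}$ into $T_n^{1/\alpha_e}$ where $T_n := t^{E_n^*|_\Nd}$, which allows me to group all factors associated with a given node. Using \cref{lem:En_triv}, the interior nodes satisfy $[E_{n_i}^*] = 0$ in $H$, and their grouped contribution is $(1-T_i)^{s_i}\prod_{j=1}^{s_i}\bigl(1-\chi([E_{e_j^{z,i}}^*])T_i^{1/N}\bigr)^{-1}$. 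Applying the cyclotomic identity $\prod_{\omega^k=1}(1-\omega U) = 1-U^k$ (and its partial variants) allows these grouped products to collapse for many characters $\chi$, with most characters yielding zero.

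Summing over characters should leave only a small number of surviving contributions. These combine with the factors from the boundary nodes and the ends in $\E^x \cup \E^y$ to yield the right-hand side. Key inputs in the identification are $\wth f|_\Nd = N \wth z|_\Nd$ from \cref{lem:wt_f_z}, and the fact, noted just before \cref{lem:ends_nodes}, that $\wth t = \sum_{e\in\E^t} E_e^*$ for $t\in\{x,y,z\}$, so that under restriction and averaging these assemble exactly into the monomial weights on the right.

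The main obstacle I expect is handling the boundary nodes, where $[E_{n_1}^*]$ and $[E_{n_r}^*]$ are non-trivial in $H$ (of orders dividing $s_x$ and $s_y$ by \cref{lem:En_triv}) and the ends in $\E^x, \E^y$ bring in additional cyclotomic structure with the mixed denominators $\alpha_x, N/s_x$ (resp.\ $\alpha_y, N/s_y$). Using the explicit generators provided by \cref{lem:generators} to track the action of characters on $[E_{n_1}^*]$, $[E_{n_r}^*]$, and the end classes $[E_e^*]$ should pinpoint which characters contribute and let one verify that the surviving partial products combine into $(1-t^{\wth f})/((1-t^{\wth x})(1-t^{\wth y})(1-t^{\wth z}))$.
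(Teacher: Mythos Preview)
Your approach via Fourier inversion over $\hat H$ is a legitimate route in principle, but it is genuinely different from the paper's argument, and as written it remains a plan rather than a proof: you acknowledge that the boundary nodes are an unresolved obstacle, and the phrases ``should leave only a small number of surviving contributions'' and ``should pinpoint which characters contribute'' are exactly where the work lies.

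The paper avoids the character sum entirely. Instead of averaging $Z^\chi$ over all $\chi$, it expands each end-factor $(1-[E_e^*]t^{E_e^*})^{-1}$ as a \emph{finite} geometric sum with $\alpha_e$ terms (using \cref{lem:ends_nodes} to absorb the $\alpha_e$-th power into the node factor), and then asks directly: for which choices of exponents $(j_i)$, $(l_{i,k_i})$ is the resulting class in $H$ trivial? The explicit generators of $V_\E$ in \cref{lem:generators} answer this combinatorially: triviality forces the $j_i$ to be equal and, independently, the $l_{i,k_i}$ to be equal. This is the key step you are missing---it shows that the operation $(\cdot)_0$ splits multiplicatively across the $x$-block and the $z$-block, after which each block collapses to a single geometric series by inspection. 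No averaging, no cyclotomic identities over characters, no case analysis over $\hat H$.

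Your character-theoretic route would ultimately have to recover this same rigidity (that the only integral exponent vectors in the fundamental domain are the diagonal ones), just repackaged as a vanishing statement for most $\chi$. That is doable, but it is a detour: the content of \cref{lem:generators} is precisely what makes the direct extraction clean, and invoking it through $\hat H$ adds a layer without buying anything. If you want to complete your version, the concrete task is to show that $Z^\chi(t)$ restricted to $\Nd$ vanishes unless $\chi$ is trivial on the subgroup generated by differences $[E_{e_i^x}^*]-[E_{e_{i+1}^x}^*]$ and $[E_{e^z_{i,k}}^*]-[E_{e^z_{i',k'}}^*]$, and then to evaluate the remaining sum; but at that point you are essentially reproving \cref{lem:generators} dually.
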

\begin{proof}
We assume that $s_x > 1$ and $s_y = 1$. The other cases are obtained by a
small variation of this proof. Note that in this case we have
$s_1 = 1$.

In what follows, we always assume all divisors to be restricted to $\Nd$.
In particular, in view of \ref{lem:ends_nodes}, we can make the identification
$([E^*_e] t^{E_e})^{\alpha_e} = [E^*_n] t^{E_n}$ for any $n\in\Nd$
and $e\in\E_n$. Given our assumption, we have $E_{e_1^y}^* = \wt y \in L$.
This means that if we write $Z'(t) = Z(t)(1-t^{E_{e_1^y}^*})$ we have
$Z_0(t) = Z'_0(t)/(1-t^{\wt y})$. We can therefore focus on $Z_0'$ instead
of $Z_0$. Write
\[
\begin{split}
  Z'(t)=& \frac{\left(1-[E^*_{n_1}] t^{E_{n_1}^*}\right)^{s_x}}
              {\prod_{i=1}^{s_x}\left(1-[E^*_{e_i^x}] t^{E_{e_i^x}^*}\right)}
          \cdot
		  \frac{1}{1-[E^*_{e_{1,1}^z}] t^{E_{e_i^x}^*}}
          \cdot
		  \prod_{i=2}^r
		  \frac{\left(1-[E^*_{n_i}] t^{E_{n_i}^*}\right)^{s_i}}
		  {\prod_{k_i=1}^{s_i} 1-[E^*_{e^z_{i,k_i}}] t^{E_{e^z_{i,k_i}}^*}}\\
       =& \prod_{i=1}^{s_x}
	      \sum_{j_i = 0}^{\alpha_x-1}
               \left([E^*_{e_i^x}] t^{E_{e_i^x}^*}\right)^{j_i}
		  \cdot
		  \sum_{l=0}^\infty \left([E^*_{e_{1,1}^z}] t^{E_{e_i^x}^*}\right)^l
          \cdot
		  \prod_{i=2}^r
          \prod_{k_i=1}^{s_i}
		  \sum_{l_{i,k_i}=0}^{N-1}
		  \left([E^*_{e^z_{i,k_i}}] t^{E_{e^z_{i,k_i}}^*}\right)^{l_{i,k_i}}
\end{split}
\]
Considering the presentation for $H$  given
in \ref{lem:generators}, one sees that if the coefficient
\[
  \prod_{i=1}^{s_x} [E^*_{e_i^x}]^{j_i} \cdot [E^*_{e_{1,1}^z}]^l
  \cdot \prod_{i=2}^r \prod_{k_i=1}^{s_i} [E^*_{e^z_{i,k_i}}]^{l_{k_i}}
  =
  \left[
  \sum_{i=1}^{s_x} j_iE^*_{e_i^x} + lE^*_{e_{1,1}^z}
  + \sum_{i=2}^r \sum_{k_i=1}^{s_i} l_{k_i}E^*_{e^z_{i,k_i}}
  \right]
\]
is trivial and $0\leq j_i < \alpha_x$, then in fact $j_i$ is constant
and both $\prod_{i=1}^{s_x} [E^*_{e_i^x}]^{j_i}$ and 
$[E^*_{e_{1,1}^z}]^l\cdot
\prod_{i=2}^r \prod_{k_i=1}^{s_i} [E^*_{e^z_{i,k_i}}]^{l_{k_i}}$ are trivial.
Therefore we get
\[
  Z_0'(t) = \left(\prod_{i=1}^{s_x}
	      \sum_{j_i = 0}^{\alpha_x-1} \left
               ([E^*_{e_i^x}] t^{E_{e_i^x}^*}\right)^{j_i}
		  \right)_0
		  \cdot
		  \left(
		  \frac{1}{1-[E^*_{e_{1,1}^z}] t^{E_{e_i^x}^*}}.
		  \prod_{i=2}^r
		  \prod_{k_i=1}^{s_i}
		  \sum_{l_{i,k_i}=0}^{N-1}
		    \left([E^*_{e^z_{i,k_i}}] t^{E_{e^z_{i,k_i}}^*}\right)^{k_i}
		  \right)_0
\]
and
\[
  \left(\prod_{i=1}^{s_x}
	      \sum_{j_i = 0}^{\alpha_x-1}
                \left([E^*_{e_i^x}] t^{E_{e_i^x}^*}\right)^{j_i}
		  \right)_0
  = \sum_{j=0}^{\alpha_x-1}
    t^{j\left(E_{e_1^x}^* + \cdots + E_{e_{s_x}^x}^*\right)}
  = \frac{1-t^{\alpha_x \wt x}}{1-t^{\wt x}}
\]
We have 
$t^{\alpha_x \wt x} = t^{s_x E_{n_1}^*}
= ([E^*_{e_{1,1}^z}] t^{E_{e_{1,1}^z}^*})^N$ by \ref{lem:ends_nodes}.
Thus, we may continue
\[
\begin{split}
  Z_0'(t) =&
		  \frac{1}{1-t^{\wt x}}
		  \cdot
		  \left(
		  \frac{1-\left([E^*_{e_{1,1}^z}] t^{E_{e_{1,1}^z}^*}\right)^N}
		       {1-[E^*_{e_{1,1}^z}] t^{E_{e_i^x}^*}}
		  \prod_{i=2}^r
		  \prod_{k_i=1}^{s_i}
		  \sum_{l_{i,k_i}=0}^{N-1}
		    \left([E^*_{e^z_{i,k_i}}] t^{E_{e^z_{i,k_i}}^*}\right)^{k_i}
		  \right)_0 \\
         =&
		  \frac{1}{1-t^{\wt x}}
		  \cdot
		  \left(
		  \prod_{i=1}^r
		  \prod_{k_i=1}^{s_i}
		  \sum_{l_{i,k_i}=0}^{N-1}
		    \left([E^*_{e^z_{i,k_i}}] t^{E_{e^z_{i,k_i}}^*}\right)^{l_{i,k_i}}
		  \right)_0.\\
\end{split}
\]
From \cref{lem:generators} one sees that 
$\prod_{i=1}^r \prod_{k_i=1}^{s_i}[E^*_{e^z_{i,k_i}}]^{l_{i,k_i}}$ is
trivial (assuming $0\leq l_{i,k_i} < N$) if and only if $l_{i,k_i}$
is constant. Thus, 
\[
		  \left(
		  \prod_{i=1}^r
		  \prod_{k_i=1}^{s_i}
		  \sum_{l_{i,k_i}=0}^{N-1}
		    \left(
              [E^*_{e^z_{i,k_i}}] t^{E_{e^z_{i,k_i}}^*}
            \right)^{l_{i,k_i}}
		  \right)_0
		  =
		  \sum_{l=0}^{N-1} 
          \left(
		  \prod_{i=1}^r
		  \prod_{k_i=1}^{s_i}
		    t^{E_{e^z_{i,k_i}}^*}\right)^l
          = \frac{1-t^{N \wth z}}{1-t^{\wth z}}.
\]
We therefore get, using \cref{lem:wt_f_z},
\[
  Z_0 = \frac{1}{1-t^{\wth y}}
        \cdot
        \frac{1}{1-t^{\wth x}}
        \cdot
        \frac{1-t^{\wth f}}{1-t^{\wth z}}
\]
which finishes the proof.
\end{proof}

\section{An example}
Let
\[
  f(x,y,z) = x^9 + x^4y^2 + x^2y^4 + y^7 + z^7.
\]
In this case we have $N = 7$ and by \cref{thm:susp}
\[
  s_x = 7,\quad \alpha_x = 2, \quad s_1 = 1, \quad
  s_y = 1,\quad \alpha_y = 2, \quad s_3 = 7, \quad
\]
\[
  s_z = s_1 + s_2 + s_3 = 1 + 2 + 1 = 4.
\]
By \cref{lem:H_order}, we have $|H| = 7^3 2^6 = 21952$, and
\[
  P^\F(t) = Z_0^\Nd(t)
  = \frac{1-t_1^{14} t_2^{42}t_3^{126}}
         {(1-t_1^{3} t_2^{7} t_3^{14})
          (1-t_1^{2} t_2^{7} t_3^{35})
          (1-t_1^{2} t_2^{6} t_3^{18})}.
\]
\begin{figure}[ht]
\begin{center}
\input{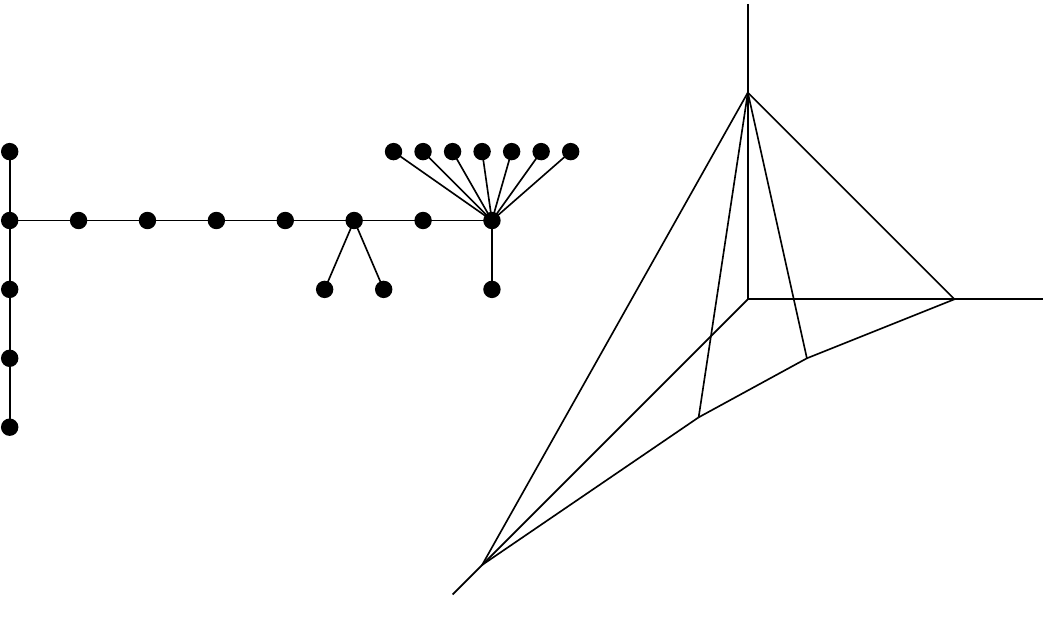_t}
\caption{Unmarked vertices have Euler number $-2$.}
\label{fig:ex}
\end{center}
\end{figure}


\begin{thebibliography}{10}

\bibitem{Newton_nondeg}
{Braun}, G\'abor; {N\'emethi}, Andr\'as.
\newblock {Invariants of Newton non-degenerate surface singularities.}
\newblock \emph{Compos. Math.}, 143(4):1003--1036, 2007.

\bibitem{Eb_Gus_multi}
{Ebeling}, W.; {Gusein-Zade}, S.~M.
\newblock {Multi-variable Poincar\'e series associated with Newton diagrams.}
\newblock \emph{{J. Singul.}}, 1:60--68, 2010.

\bibitem{Ebeling_Gusein-Zade}
{Ebeling}, Wolfgang; {Gusein-Zade}, Sabir~M.
\newblock {On divisorial filtrations associated with Newton diagrams.}
\newblock \emph{J. Singularities}, 3:1--7, 2011.

\bibitem{EisNeu}
{Eisenbud}, David; {Neumann}, Walter~D.
\newblock \emph{{Three-dimensional link theory and invariants of plane curve
  singularities.}}, volume 110 of \emph{Annals of Mathematics Studies}.
\newblock Princeton University Press, 1985.

\bibitem{Lemah}
Lemahieu, Ann.
\newblock Poincar\'e series of embedded filtrations.
\newblock \emph{Math. Res. Lett.}, 18(5):815--825, 2011.

\bibitem{Milnor_hyp}
Milnor, John.
\newblock \emph{Singular points of complex hypersurfaces}, volume~61 of
  \emph{Ann. of Math. Stud.}
\newblock Princeton University Press, Princeton, N.J.; University of Tokyo
  Press, Tokyo, 1968.

\bibitem{Nemethi_FL}
{N{\'e}methi}, Andr{\'a}s.
\newblock {Five lectures on normal surface singularities.}
\newblock In \emph{{Low dimensional topology}}, volume~8 of \emph{Bolyai Soc.
  Math. Stud.}, 269--351. Budapest: J\'anos Bolyai Math. Soc., Budapest, 1999.

\bibitem{Nem_Poinc}
N{\'e}methi, Andr{\'a}s.
\newblock Poincar\'e series associated with surface singularities.
\newblock In \emph{Singularities {I}}, volume 474 of \emph{Contemp. Math.},
  271--297. Amer. Math. Soc., Providence, RI, 2008.

\bibitem{Nem_coh_splice}
{N\'emethi}, Andr\'as.
\newblock {The cohomology of line bundles of splice-quotient singularities.}
\newblock \emph{{Adv. Math.}}, 229(4):2503--2524, 2012.

\bibitem{NeumWa_SpQ}
{Neumann}, Walter~D.; {Wahl}, Jonathan.
\newblock {Complete intersection singularities of splice type as universal
  Abelian covers.}
\newblock \emph{{Geom. Topol.}}, 9:699--755, 2005.

\bibitem{Oka}
Oka, Mutsuo.
\newblock On the resolution of the hypersurface singularities.
\newblock In \emph{Complex analytic singularities}, volume~8 of \emph{Adv.
  Stud. Pure Math.}, 405--436. North-Holland, Amsterdam, 1987.

\bibitem{Var_zeta}
{Varchenko}, Alexander~N.
\newblock {Zeta-function of monodromy and Newton's diagram.}
\newblock \emph{{Invent. Math.}}, 37:253--262, 1976.

\end{thebibliography}

\end{document}